\documentclass[a4paper]{article}

\usepackage[utf8]{inputenc}
\usepackage[T1]{fontenc}
\usepackage{lmodern}
\usepackage{array}
\usepackage{graphicx}
\usepackage{amssymb,amsfonts,amsthm,amsmath,bbm,mathrsfs,aliascnt,mathtools}
\usepackage[english]{babel}
\usepackage[colorlinks]{hyperref}
\usepackage{tikz}

\usepackage{subfigure}

\theoremstyle{plain}
\newtheorem{theorem}{Theorem}[section]
\newtheorem{corollary}[theorem]{Corollary}
\newtheorem{lemma}[theorem]{Lemma}
\newtheorem{proposition}[theorem]{Proposition}

\newtheorem{conjecture}[theorem]{Conjecture}
\newtheorem{fact}[theorem]{Fact}

\theoremstyle{definition}
\newtheorem{definition}[theorem]{Definition}

\theoremstyle{remark}
\newtheorem{remark}[theorem]{Remark}

\newcommand{\N}{\mathbb{N}}
\newcommand{\Z}{\mathbb{Z}}
\newcommand{\R}{\mathbb{R}}

\newcommand{\ind}[1]{\mathbf{1}_{\left\{#1\right\}}}

\newcommand{\floor}[1]{{\left\lfloor #1 \right\rfloor}}

\numberwithin{equation}{section}

\DeclareMathOperator{\E}{\mathbb{E}}

\renewcommand{\P}{\mathbb{P}}

\newcommand{\calF}{\mathcal{F}}

\newcommand{\calU}{\mathcal{U}}

\newcommand{\calD}{\mathcal{D}}

\renewcommand{\bar}[1]{\overline{#1}}

\newcommand{\egaldistr}{\text{ }{\overset{(d)}{=}}\text{ }}

\newcommand{\T}{\mathbf{T}}
\renewcommand{\tilde}[1]{\widetilde{#1}}
\renewcommand{\hat}[1]{\widehat{#1}}

\renewcommand{\rho}{\varrho}
\renewcommand{\epsilon}{\varepsilon}

\title{Genealogy of the extremal process of the~branching~random~walk}
\author{Bastien Mallein\thanks{LAGA, Université Paris 13}}
\date{\today}

\begin{document}

\maketitle

\begin{abstract}
The extremal process of a branching random walk is the point measure recording the position of particles alive at time $n$, shifted around the expected position of the minimal position. Madaule~\cite{Mad15} proved that this point measure converges, as $n \to \infty$, toward a randomly shifted, decorated Poisson point process. In this article, we study the joint convergence of the extremal process together with its genealogical informations. This result is then used to describe the law of the decoration in the limiting process, as well as to study the supercritical Gibbs measures of the branching random walk.
\end{abstract}

\section{Introduction}

A branching random walk on $\R$ is a discrete time particle system on the real line, which can be defined as follows. It starts with a unique particle positioned at 0 at time 0. At each new time $n \in \N$, each particle alive at time $(n-1)$ dies, giving birth to children that are positioned according to i.i.d. versions of a random point measure, shifted by the position of their parent. We denote by $\T$ the genealogical tree of the branching random walk. For $u \in \T$, we write $V(u)$ for the position of the particle $u$ and $|u|$ for the time at which $u$ is alive. The branching random walk is the random marked tree $(\T,V)$.

We assume that the process is supercritical:
\begin{equation}
  \label{eqn:supercritical}
  \E\left( \#\{u \in \T : |u|=1\} \right) > 1.
\end{equation}
It is a well-known result from Galton-Watson processes theory that this assumption is equivalent to the fact that the surviving event $S = \{ \#\T = \infty\}$, in which the process never dies out, occurs with positive probability. Moreover, we assume the branching random walk to be in the boundary case:
\begin{equation}
  \label{eqn:boundary}
  \E\left( \sum_{|u|=1} e^{-V(u)} \right) = 1 \quad  \text{and} \quad  \E\left( \sum_{|u|=1} V(u)e^{-V(u)} \right) = 0,
\end{equation}
and that the reproduction law is non-lattice. This assumption guarantees that the minimal position $M_n = \min_{|u|=n} V(u)$ satisfies $\lim_{n \to \infty} \frac{M_n}{n} = 0$ a.s. (c.f. Biggins \cite{Big76}). Any branching random walk satisfying mild assumptions can be reduced to this case by an affine transformation (see e.g. the discussion in~\cite{BeG11}). We set
\[
  \forall n \geq 0, \quad W_n = \sum_{|u|=n} e^{-V(u)} \quad \text{and} \quad Z_n = \sum_{|u|=n} V(u)e^{-V(u)}.
\]
By \eqref{eqn:boundary} and the branching property of the branching random walk, the processes $(W_n)$ and $(Z_n)$ are martingales, which are called the critical martingale and derivative martingale of the branching random walk respectively.

We introduce the following additional integrability conditions:
\begin{align}
  &\sigma^2 := \E\left( \sum_{|u|=1} V(u)^2 e^{-V(u)} \right) \in (0,\infty)\label{eqn:variance}\\
  \text{and} \quad &\E\left(\sum_{|u|=1} e^{-V(u)} \log_+\left(\sum_{|u|=1} (1 + V(u)_+)e^{-V(u)}\right)^2 \right) < \infty\label{eqn:integrability},
\end{align}
where $x_+ = \max(x,0)$ and $\log_+(x) = \max(\log x,0)$. Under these assumptions, is is well-known (see \cite{Aid13,BiK04}) there exists a random variable $Z_\infty$, which is a.s. positive on the survival event $S$, such that
\begin{equation}
  \lim_{n \to \infty} Z_n = Z_\infty \quad \text{and} \quad \lim_{n \to \infty} W_n = 0 \quad \text{a.s.}
  \label{eqn:cvMartingales}
\end{equation}
Assumption \eqref{eqn:integrability} is a rephrasing of \cite[Equation (1.4)]{Aid13} (see Lemma~\ref{lem:equivalent} for the proof of the equivalence of these two integrability conditions). However, this version appears directly in our computations, cf Lemma~\ref{lem:entangled}.

Recall that $M_n = \min_{|u|=n} V(u)$ is the minimal position at time $n$ occupied by a particle. We set $m_n = \frac{3}{2} \log n$. Under the above integrability assumptions, Addario-Berry and Reed \cite{ABR09} observed that $(M_n - m_n)$ is tight, and Hu and Shi \cite{HuS09} proved this sequence has almost sure logarithmic size fluctuations. Finally, Aïdékon \cite{Aid13} obtained the convergence in law of $M_n - m_n$, and Chen \cite{Che15} proved the above integrability assumptions to be optimal for this convergence in law. We take interest in all particles that are at time $n$ in a $O(1)$ neighborhood of the minimal displacement $M_n$.

We introduce some notation on point measures, the Radon measures on $\R$ that takes values in $\Z_+ \cup \{ \infty\}$. Given a point measure $\rho$, we denote by $\mathcal{P}(\rho)$ the multiset of the atoms of the point measure $\rho$, that satisfy
\[
  \rho = \sum_{r \in \mathcal{P}(\rho)} \delta_r.
\]
For any $x \in \R$, we write $\theta_x \rho = \sum_{r \in \mathcal{P}(\rho)} \delta_{r + x}$ the shift of the measure $\rho$ by $x$. The space of point measures is endowed with the topology of the vague convergence, meaning that we write $\lim_{n \to \infty} \rho_n = \rho_\infty$ if $\lim_n \rho_n(f) = \rho_\infty(f)$ for any continuous function $f$ on $\R$ with compact support. As observed in \cite[Theorem A2.3]{Kal02}, the set of random point measures endowed with the topology of the vague convergence is a Polish space.

We use the extremal process of the branching random walk to record the positions of particles close to the maximal displacement at time $n$, defined by
\begin{equation}
  \label{eqn:defineExtremalProcess}
  \gamma_n = \sum_{|u|=n} \delta_{V(u) - m_n}.
\end{equation}
Madaule \cite{Mad15} proved the convergence in law of the extremal process toward a decorated Poisson point process with exponential intensity, or more precisely the following result.
\begin{fact}[Theorem 1.1 in \cite{Mad15}]
\label{fct:Madaule}
We assume \eqref{eqn:supercritical}, \eqref{eqn:boundary}, \eqref{eqn:variance} and \eqref{eqn:integrability}. There exist $c_*>0$ and a point measure $D$ satisfying $\min D = 0$ a.s. such that
\[
  \lim_{n \to \infty} \left(\gamma_n,Z_n\right) = \left( \gamma_\infty, Z_\infty \right) \quad \text{in law on $S$,}
\]
for the topology of the vague convergence, where $(\xi_n)$ are the atoms of a Poisson point process with intensity $c_* e^x dx$, $(D_n, n \geq 1)$ are i.i.d. copies of $D$ and
\begin{equation}
  \label{eqn:rhoInfinity}
  \gamma_\infty = \sum_{n=1}^{\infty} \theta_{\xi_n- \log Z_\infty} D_n.
\end{equation}
We denote by $\mathcal{D}$ the law of the point measure $D$.
\end{fact}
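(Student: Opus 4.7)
The plan is to prove convergence by computing the Laplace functional $\Phi_n(\phi) := \E[\exp(-\langle \gamma_n, \phi\rangle)]$ for $\phi \in C_c^+(\R)$ and identifying the limit with that of the candidate decorated Poisson point process. A branching decomposition at an intermediate level $k$ yields the identity
\begin{equation*}
\gamma_n = \sum_{|u|=k} \theta_{V(u) + m_{n-k} - m_n}\, \gamma_{n-k}^{(u)},
\end{equation*}
where the $\gamma_{n-k}^{(u)}$ are, conditionally on $\mathcal{F}_k$, independent copies of the extremal process of a branching random walk, obtained from the subtrees rooted at each $u$. Since $m_{n-k} - m_n \to 0$ as $n \to \infty$ for fixed $k$, conditioning on $\mathcal{F}_k$ gives
\begin{equation*}
\Phi_n(\phi) = \E\left[\prod_{|u|=k} g_{n-k}(V(u) + o(1))\right], \qquad g_m(x) := \E[\exp(-\langle \theta_x \gamma_m, \phi\rangle)].
\end{equation*}

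The analytic heart of the proof is a uniform expansion of $1 - g_m(x)$ as $m$ and $x$ go to infinity jointly. The shifted measure $\theta_x \gamma_m$ charges the support of $\phi$ only when some particle at time $m$ lies near height $m_m - x$; Aïdékon's sharp tail estimate for $M_m - m_m$, together with a description of the cluster of particles accompanying the minimum, should yield
\begin{equation*}
1 - g_m(x) \sim c_*\, \Psi(\phi)\, h(x)\, e^{-x}, \qquad x \to \infty,
\end{equation*}
where $\Psi(\phi)$ is a functional of $\phi$ and of the limiting decoration $D$, and $h(x)$ is the appropriate sub-exponential factor calibrated so that $\sum_{|u|=k} h(V(u)) e^{-V(u)}$ converges, after the usual truncation on $\{V(u) \geq 0\}$, to the derivative martingale. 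Substituting in the product, using $\prod(1 - y_i) \approx \exp(-\sum y_i)$, and letting first $n \to \infty$ and then $k \to \infty$, the sum converges by \eqref{eqn:cvMartingales} to $Z_\infty$, so $\Phi_n(\phi) \to \E[\exp(-c_* \Psi(\phi) Z_\infty)]$. A direct computation of the Laplace functional of the shifted decorated Poisson point process in \eqref{eqn:rhoInfinity} matches this expression, which simultaneously identifies the constant $c_*$ and pins down the law $\mathcal{D}$ through the functional $\Psi$.

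Two points are delicate. First, one must control \emph{early leaders}, particles $u$ at level $k$ with $V(u)$ very negative, whose descendants would break the truncation used to read $Z_\infty$ off the sum; this is handled through a uniform lower tail estimate on $M_m$ proved by a Lyons-type truncated spine decomposition. Second, and this is where I expect the bulk of the technical work to lie, the very existence of the decoration law $\mathcal{D}$ must be established: conditionally on a particle being at position $M_n$, the recentered configuration $\sum_{|v|=n} \delta_{V(v) - M_n}$ must converge in law as $n \to \infty$. This demands a careful spine decomposition coupled with a random walk conditioned to stay positive, which describes simultaneously the trajectory of an extremal particle from the root to its position near $M_n$ and the branching-off of the decorating cluster in the last generations before time $n$.
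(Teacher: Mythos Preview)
This statement is labelled a \emph{Fact} in the paper and is not proved there: it is quoted verbatim as Theorem~1.1 of Madaule~\cite{Mad15}, and the paper takes it as an input to the proof of Theorem~\ref{thm:main}. There is therefore no proof in the paper against which to compare your proposal.

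That said, your outline is a reasonable sketch of a direct attack on Madaule's theorem, and it is worth noting one structural difference from what the paper reports about Madaule's argument. The paper remarks that in \cite{Mad15} ``the limiting point measure is identified using its superposability property'': one first shows tightness and convergence of $\gamma_n$ along subsequences, then observes that any limit inherits, via the branching decomposition at level $k$, the stability relation characterising SDPPP($c_*e^x dx$, $-\log Z_\infty$, $\mathcal{D}$) in the sense of Subag--Zeitouni~\cite{SuZ15}. This route avoids having to construct the decoration law $\mathcal{D}$ explicitly \emph{before} proving convergence, which is exactly the part of your plan you flagged as ``where the bulk of the technical work lies''. Your second paragraph essentially asks for a direct cluster-law limit (the programme carried out for branching Brownian motion in \cite{ABBS13}); the superposability approach sidesteps this, at the cost of yielding no description of $\mathcal{D}$---which is precisely the gap the present paper then fills in Section~\ref{sec:decoration}.
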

The point measure $\gamma_\infty$ is called a shifted decorated Poisson point process with shift $-\log Z_\infty$ and decoration law $\calD$ (or SDPPP($c_*e^{x}dx$,$-\log Z_\infty$, $\calD$) for short). These point measures have been studied in particular by Subag and Zeitouni \cite{SuZ15}. The proof of Fact~\ref{fct:Madaule} gives little information on the law $\calD$ of the point measure $D$ used for the decoration of $\gamma_\infty$. Indeed, the convergence of $(\gamma_n)$ is obtained through the study of its Laplace transform, and the law of the limiting point measure $\gamma_\infty$ is identified using its superposability property\footnote{Cf. Maillard \cite{Mai13} for the characterization of point measures occurring as the limits of the extremal processes of branching random walks.}.

A result similar to Fact~\ref{fct:Madaule} was previously obtained for the branching Brownian motion independently by Arguin, Bovier and Kistler \cite{ABK13}, and Aïdékon, Berestycki, Brunet and Shi \cite{ABBS13}. In this model as well, the extremal process converges toward a decorated Poisson point process. However, the decoration law is explicitly described in both these articles. In \cite{ABBS13}, the point measure $D$ corresponds to positions of the close relatives of the particle realizing the minimal displacement. In \cite{ABK13}, it is described as the extremal process of the branching random walk conditioned on having an unusually small minimal displacement.

In this article, we observe that using the branching property as well as an enriched version of the extremal process, Fact~\ref{fct:Madaule} immediately implies a stronger version of itself. More precisely, thanks to a careful encoding of the genealogy of the branching random walk, which is presented in Section~\ref{sec:notation}, we can prove the joint convergence in law of the extremal process with some genealogical informations in Section~\ref{sec:cv}. This convergence yields the observation that in the point process $\gamma_\infty$, the Poisson point process correspond to leaders realizing independently a small displacement, while each decoration comes from the family of the close relatives to a leader. This is reminiscent of the result obtained in \cite{ABK12} in the context of branching Brownian motion.

Similar results of convergence of enriched extremal processes have been recently obtained by Biskup and Louidor \cite{BiL} for the 2 dimensional Gaussian free field, and by Bovier and Hartung \cite{BoH} for the branching Brownian motion. Cortines, Hartung and Louidor \cite{CHL17} obtained refined results on the law of the decoration of the branching Brownian motion using among other things enrichment of the extremal process techniques. This method thus seems promising. For example, it might be used to proved simultaneous convergence in law of the rescaled trajectories of extremal particles toward Brownian excursions, as conjectured in \cite{CMM}, i.e. that for all $\beta > 1$
\begin{equation}
  \label{eqn:conjCMM}
  \lim_{n \to \infty} \frac{1}{\sum_{|u|=n} e^{-\beta V(u)}} \sum_{|u|=n} e^{-\beta V(u)}\delta_{H_n(u)} = \sum_{k \in \N} p_k \delta_{\mathbf{e}_k} \quad \text{ in law},
\end{equation}
where $H_n(u) : t \in [0,1] \mapsto n^{-1/2}V(u_{\floor{nt}})$, $(p_k, k \geq 1)$ is a Poisson-Dirichlet distribution with parameters ($\beta^{-1},0$), and $(\mathbf{e}_k, k \geq 1)$ are i.i.d. standard Brownian excursions.

We use here the convergence of the extremal process with genealogical informations to obtain simple proofs for a few additional results. We study the weak convergence of the so-called supercritical Gibbs measure of the branching random walk, as obtained in \cite{BRV12}. We also prove a conjecture of Derrida and Spohn on the asymptotic behavior of the so-called overlap of the branching random walk. More precisely, conditionally on the branching random walk $(\T,V)$ we select two particles $u^{(n)}$, $v^{(n)}$ at the $n$th generation with probability proportional to $e^{-\beta (V(u) + V(v))}$, and denote by $\omega_{n,\beta}$ the law of the age of their most recent common ancestors, rescaled by a factor $n$. We prove that $(\omega_{n,\beta})$ converges, as $n \to \infty$ toward the probability measure $(1-\pi_\beta)\delta_0 + \pi_\beta \delta_1$, where $\pi_\beta$ is a random variable whose law depend on $\beta$.

As an other application of the convergence of the enriched extremal process, we finally obtain a description of the law $\calD$ of the decoration of this process as the limit of the position of close relatives of the minimal displacement at time $n$. This result mimics the one proved in \cite{ABBS13} for the decoration of the branching Brownian motion. We expect a result similar to \cite{ABK13} would also holds, i.e. that the law $\calD$ could be obtained as the limit in distribution of the extremal process conditioned on having a very small minimum.

\paragraph*{Outline.}
In the next section, we precise the encoding of the branching random walk, and use it to define the so-called critical measure: a measure on the boundary of the tree $\T$ of the branching random walk, whose distribution is related to the derivative martingale. We prove in Section~\ref{sec:cv} the convergence of the enriched extremal process. In Section~\ref{sec:applications}, we use this enriched convergence to prove the weak convergence of the supercritical Gibbs measure and the Derrida--Spohn conjecture. The expression of the law of the decoration $\calD$ as the position of close relatives of the minimal displacement at time $n$ is obtained in Section~\ref{sec:decoration}.

\section{The critical measure of the branching random walk}
\label{sec:notation}

In this section, we first introduce the so-called Ulam-Harris notation for trees, that is used for a precise definition of the the branching random walk. In a second time we define the so-called critical measure of the branching random walk and study some of its properties. This measure is defined on the boundary of the tree of the branching random walk, and its distribution is related to the derivative martingale.

\subsection{Construction of the branching random walk}
\label{subsec:ulam}

We introduce the sets
\[
  \calU = \bigcup_{n \geq 0} \N^n ,\quad \partial \calU = \N^\N \quad \text{and} \quad \bar{\calU} = \calU \cup \partial \calU,
\]
with the convention $\N^0 = \{ \emptyset \}$. In the Ulam-Harris notation, a (plane, rooted) tree is constructed as a subset of $\calU$, each element $u \in \calU$ representing a potential individual.

Let $u \in \bar{\calU}$, which is a finite or infinite sequence of integers. We denote by $|u|$ the length of the sequence $u$ and, for $k \leq |u|$ by $u_k$ the sequence consisting of the $k$ first values of $u$. If $u \in \mathcal{U} \backslash \{\emptyset\}$, we write $\pi u = u_{|u|-1}$ the sequence obtained by erasing the last element. For $u \in \calU$ and $v \in \bar{\calU}$, we denote by $u.v$ the concatenation of the sequences. For $u,v \in \bar{\calU}$, we write $u \leq v$ if $v_{|u|}=u$, which define a partial order on $\bar{\calU}$. We then define $|u \wedge v| = \max\{ k \in \N : u_k = v_k\}$ and $u\wedge v = u_{|u \wedge v|} = v_{|u \wedge v|}$.

The genealogical tree $\T$ of the branching random walk is encoded as a subset of $\mathcal{U}$ in the following way. The root is encoded by the empty sequence $\emptyset$, while $u = (u(1), \ldots u(n)) \in \mathcal{U}$ represents the $u(n)$th child of the $u(n-1)$th child of the ... of the $u(1)$th child of the root. With this encoding, $\pi u$ is the parent of $u$, $|u|$ the generation to which $u$ belongs, $u_k$ the ancestor of $u$ at generation $k$. We write $u<v$ if $u$ is an ancestor of $v$, and $u \wedge v$ is the most recent common ancestor of $u$ and $v$.

The family of positions $(V(u), u \in \T)$ is then a random map from $\T$ to $\R$, which can be extended as a random map $\calU \to \R \cup\{-\infty\}$, by setting $V(u) = -\infty$ for $u \in \calU \backslash \T$. We then call $V : \calU \to \R \cup\{-\infty\}$ the branching random walk, which can be constructed as follows. Let $\{(\ell^u_j, j \in \N), u \in \calU\}$ be a family of i.i.d. random variables in $(\R \cup\{-\infty\})^\N$, we set
\[
  V(u) = \sum_{j=1}^{|u|} \ell^{u_{j-1}}_{u(j)},
\]
with the convention $-\infty + x = x - \infty = -\infty$ for all $x \in \R \cup \{-\infty\}$. The law of $(\ell^\emptyset_j, j \in \N)$ is called the reproduction law of the branching random walk $V$. Note that one can recover $\T$ from $V$ as $\{ u \in \calU : V(u) > -\infty\}$.

\subsection{A topology on the set of leaves of infinite trees}

With the above notation, the set $\partial \calU$ represents the set of possible leaves in the tree $\T$, infinite non-backtracking paths starting from the root. The critical measure of the branching random walk that we now describe is constructed as a Radon measure on the set of leaves. In this section, we introduce a topology on $\bar{\calU}$ that makes it a compact space, and observe that finite measures on that space are identified with flows on the tree $\calU$.

We embed $\bar{\calU}$ in $[0,1]$, observing that the application
\[\Psi : u \in \bar{\calU} \longmapsto 2\sum_{j=1}^{|u|} 3^{-\sum_{i=1}^j u(i)}\]
is a bijection between $\bar{\calU}$ and the Cantor ternary set $K$, depicted in Figure~\ref{fig:Cantor}.

\begin{figure}
\label{fig:Cantor}
\centering
\begin{tikzpicture}

\draw [line width=0.3cm] (0.0,0) -- (0.099,0);
\draw [line width=0.3cm] (0.198,0) -- (0.296,0);
\draw [line width=0.3cm] (0.593,0) -- (0.691,0);
\draw [line width=0.3cm] (0.79,0) -- (0.889,0);
\draw [line width=0.3cm] (1.778,0) -- (1.877,0);
\draw [line width=0.3cm] (1.975,0) -- (2.074,0);
\draw [line width=0.3cm] (2.37,0) -- (2.469,0);
\draw [line width=0.3cm] (2.568,0) -- (2.667,0);
\draw [line width=0.3cm] (5.333,0) -- (5.432,0);
\draw [line width=0.3cm] (5.531,0) -- (5.63,0);
\draw [line width=0.3cm] (5.926,0) -- (6.025,0);
\draw [line width=0.3cm] (6.123,0) -- (6.222,0);

\draw [color=red, line width=0.34cm] (7.085,0) -- (7.23,0);
\draw [color=red, line width=0.34cm] (7.279,0) -- (7.427,0);
\draw [color=red, line width=0.34cm] (7.674,0) -- (7.822,0);
\draw [color=red, line width=0.34cm] (7.871,0) -- (8.02,0);

\draw [line width=0.3cm] (7.111,0) -- (7.21,0);
\draw [line width=0.3cm] (7.309,0) -- (7.407,0);
\draw [line width=0.3cm] (7.704,0) -- (7.802,0);
\draw [line width=0.3cm] (7.901,0) -- (8.0,0);

\draw [color = red] (8.7,0) node {$B(1,1)$};

\draw [->,thick,color=blue] (0,1.8) -- (0,0.25);
\draw [color=blue] (-0.0,1.8) node[above] {$\Psi(\emptyset)$};
\draw [->,thick,color=blue] (5.333,1.2) node[above] {$\Psi(1)$} -- (5.333,0.25);
\draw [->,thick,color=blue] (1.778,1.2) node[above] {$\Psi(2)$} -- (1.778,0.25);
\draw [->,thick,color=blue] (0.593,1.2)  -- (0.593,0.25);
\draw [color=blue] (0.593,1.2) node[above] {$\Psi(3)$};
\draw [->,thick,color=blue] (7.111,0.7) node[above] {$\Psi(1,1)$} -- (7.111,0.25);
\draw [->,thick,color=blue] (5.926,0.7) node[above] {$\Psi(1,2)$} -- (5.926,0.25);

\draw [line width=0.3cm] (0.0,-0.5) -- (0.296,-0.5);
\draw [line width=0.3cm] (0.593,-0.5) -- (0.889,-0.5);
\draw [line width=0.3cm] (1.778,-0.5) -- (2.074,-0.5);
\draw [line width=0.3cm] (2.37,-0.5) -- (2.667,-0.5);
\draw [line width=0.3cm] (5.333,-0.5) --(5.63,-0.5);
\draw [line width=0.3cm] (5.926,-0.5) -- (6.222,-0.5);
\draw [line width=0.3cm] (7.111,-0.5) -- (7.407,-0.5);
\draw [line width=0.3cm] (7.704,-0.5)-- (8.0,-0.5);

\draw [line width=0.3cm] (0.0,-1)  -- (0.889,-1);
\draw [line width=0.3cm] (1.778,-1) -- (2.667,-1);
\draw [line width=0.3cm] (5.333,-1)-- (6.222,-1);
\draw [line width=0.3cm] (7.111,-1) -- (8.0,-1);

\draw [line width=0.3cm] (0.0,-1.5)  -- (2.667,-1.5);
\draw [line width=0.3cm] (5.333,-1.5)-- (8.0,-1.5);

\draw [line width=0.3cm] (0.0,-2)  -- (8.0,-2);

\end{tikzpicture}
\caption{Mapping between $\bar{\mathcal{U}}$ and the Cantor ternary set}
\end{figure}
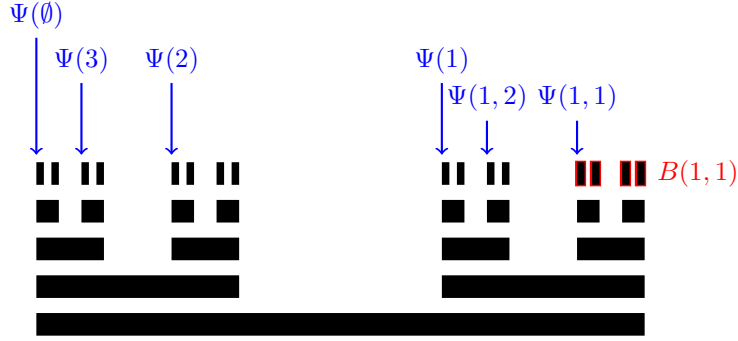

Using this bijection, we define a distance on $\bar{\calU}$ by
\[
  \forall u, v \in \bar{\calU}, \quad d(u,v) = 2^{1-\min\{ n \in \N : 3^n |\Psi(u)-\Psi(v)| \geq 1\}},
\]
with the convention that $\min \emptyset = \infty$ and $2^{-\infty} = 0$. Note this distance can be rewritten as
\[
  d(u,v) = 2^{1- \min(u(|u \wedge v|+1), v(u \wedge v|+1))- \sum_{j=1}^{|u \wedge v|} u(j)},
\]
with the convention that if $|u|=n$, then $u(n+1) = 0$. This distance measures to which depth of construction of the Cantor set one should go before the images of $u$ and $v$ by $\Psi$ are in distinct blocs. It is thus straightforward that $\bar{\calU}$ is a compact ultrametric space when endowed by this distance. Informally, the topology of $(\bar{\calU},d)$ can be described as the topology of pointwise convergence for infinite sequences of integers, with the addition that $\lim_{n \to \infty} u.n = u$, or in other words, identification between the sequence $(u(1),\ldots u(n), \infty, v(1),\ldots)$ with $u$, where $u \in \N^n$ and $v \in (\N \cup \infty)^\N$.

Note that $\calU$ is a dense countable subset of $\bar{\calU}$ for this topology. For any $u \in \mathcal{U}$, we denote by
\[
  B(u) =  \left\{ v \in \bar{\calU} : u \wedge v =u \right\} = \left\{ u.w, w \in \bar{\calU} \right\}/
\]
Observe that $\{B(u), u \in \calU\}$ is a family of open and close balls of $\bar{\calU}$ for the distance $d$. We also set, for $u \in \calU$ and $n \in \N$
\begin{equation}
  \label{def:countableBase}
  C(u,n) = B(u) \backslash \left( \cup_{j=1}^{n-1} B(u.j)\right) = \{u\} \cup \bigcup_{j = n}^{\infty} B(u.j).
\end{equation}

\begin{lemma}
\label{lem:countableBase}
The family $\{C(u,j), u \in \mathcal{U}, j \in \N\}$ forms a countable base of open sets for $(\bar{\calU},d)$.
\end{lemma}

\begin{proof}
Let $\mathcal{O}$ be an open subset of $\bar{\calU}$. We define
\begin{equation}
  \Gamma(\mathcal{O}) = \left\{ u \in \mathcal{U} : C(\pi u, u(|u|)) \not \subset \mathcal{O}, \exists j \in \N : C(u,j) \subset \mathcal{O}\right\}
\end{equation}
as well as $j_u = \inf\{ j \in \N : C(u,j) \subset \mathcal{O} \}$ for $u \in \Gamma(\mathcal{O})$. We observe that
\begin{equation}
  \label{eqn:countableBase}
  \mathcal{O} = \bigcup_{u \in \Gamma(\mathcal{O})} C(u,j_u),
\end{equation}
and that the union is of pairwise disjoint elements.

Indeed, for any $v \in \mathcal{O}$, there exists $n \in \N$ such that $B(v_n) \subset \mathcal{O}$. We denote by $n_0 = \inf\{ n \in \N : B(v_n) \subset \mathcal{O}\}$. Then either $v_{n_0} \in \Gamma(\mathcal{O})$ and $j_{v_0}=1$, or $v_{n_0-1} \in \Gamma(\mathcal{O})$ and $j_{v_{n_0-1}} \leq v(n_0)$. Reciprocally, it follows from definition that $C(u,j_u) \subset \mathcal{O}$ for all $u \in \Gamma(\mathcal{O})$.

We now observe that $u \in C(u,n)$ for all $n \in \N$. Hence, as $C(u,n) \subset B(u)$, if $C(u,n) \cap C(v,m) \neq \emptyset$, then either $u$ is an ancestor of $v$, or $v$ is an ancestor of $u$, or $u=v$. Moreover, if $u$ is an ancestor of $v$, then $v \in C(u,n)$.

We assume there exists $u \neq v \in \Gamma(\mathcal{O})$ such that $C(u,j_u) \cap C(v,j_v) \neq \emptyset$. We can assume without loss of generality that $u$ is an ancestor of $v$, hence $v = u. l . w$, with $l \in \N$ and $w \in \calU$. As $v \in C(u,j_u) $, we have $l \geq j_u$. Hence, by definition, we have $B(u.l) \subset \mathcal{O}$, which is in contradiction with the fact that $v \in \Gamma(\mathcal{O})$.
\end{proof}

Note that \eqref{eqn:countableBase} can be rewritten as the disjoint union of elements belonging to the families $\{B(u), u \in \mathcal{U}\}$ and $\{\{u\}, u \in \mathcal{U}\}$.

In the rest of the section, we study finite measures on the space $(\bar{\calU},d)$. We first identify them with pseudo-flows on $\calU$, which we now define.
\begin{definition}
A function $f : \calU \to \R_+$ is called a pseudo-flow on $\calU$ if it satisfies
\begin{equation}
  \label{eqn:pseudoflow}
  \forall u \in \calU, \quad f(u) \geq \sum_{j \in \N} f(u.j).
\end{equation}

A function $f : \calU \to \R_+$ is called a flow on $\calU$ if it satisfies
\begin{equation}
  \forall u \in \calU, \quad f(u) = \sum_{j \in \N} f(u.j).
\end{equation}
\end{definition}

To each Radon measure $\mu$ on $(\bar{\calU},d)$ we can associate a pseudo-flow on $\calU$ defined as
\[
  \forall u \in \calU, \quad f_\mu(u) = \mu(B(u)).
\]
Note that the function $f_\mu$ is a flow if and only if $\mu(\calU) = 0$. We now observe that $\mu \mapsto f_\mu$ realizes a bijection between the Radon measures and the pseudo-flows.
\begin{proposition}
\label{prop:flows}
For each pseudo-flow $f$ on $\calU$, there exists a unique finite measure $\mu$ such that $f=f_\mu$.

A measure $\mu$ is atomless if and only if $f_\mu$ is a flow and
\[
  \lim_{n \to \infty} \max_{|u|=n} f_\mu(u) = 0.
\]
\end{proposition}

\begin{proof}
Let $\mu$ and $\nu$ be two finite measures such that $f_\mu = f_\nu$. By definition, this indicates that
\[
  \forall u \in \mathcal{U}, \quad \mu(B(u)) = \nu(B(u)).
\]
Therefore, by sigma-additivity, one successively deduces that
\[
  \forall u \in \mathcal{U}, n \in \N, \quad \mu(C(u,n)) = \sum_{j=n}^\infty \mu(B(u.j)) = \sum_{j=n}^\infty \nu(B(u.j)) = \nu(C(u,n)),
\]
and, thanks to \eqref{eqn:countableBase}, that $\mu(\mathcal{O}) = \nu(\mathcal{O})$ for all open subset of $\bar{\calU}$. By monotone classes theorem, we deduce that $\mu = \nu$.

Let $f$ be a pseudo-flow on $\calU$, we now construct a measure on $\bar{\calU}$ associated to that pseudo-flow. We first observe that if $f(\emptyset) = 0$, then $f(u) = 0$ for all $u \in \calU$, hence the null measure is associated to that flow.

We now assume that $f(\emptyset) \neq 0$. Up to replacing $f$ by $f/f(\emptyset)$ we can assume without loss of generality that $f(\emptyset) = 1$. We observe that for all $n \in \N$, we can define the law $\mu_n$ on $\{ u \in \mathcal{U} : |u| \leq n\}$ by
\begin{align*}
  \forall u \in \calU : |u| < n, \quad \mu_n(u) = f(u) - \sum_{j =1}^\infty f(u.j)\\
  \forall u \in \calU : |u|=n, \quad \mu_n(u) = f(u). 
\end{align*}
Observe that thanks to the pseudo-flow property, the family of probability distributions $(\mu_n, n \geq 1)$ is consistent under the family of projections $\pi_n : u \mapsto u_n$. Hence, thanks to Kolmogorov extension theorem, there exists a probability measure $\mu$ on $\bar{\calU}$ such that its image measure by $\pi_n$ is $\mu_n$ for all $n \in \N$. Hence, one has straightforwardly $f_\mu = f$.

The second point is a straightforward consequence as if $\mu$ has an atom of mass $x$ at point $u \in \bar{\calU}$, then
\[
  f_\mu(u) = x + \sum_{j=1}^\infty f_\mu(u.j),
\]
if $u \in \calU$, by sigma-additivity, and
\[
  \lim_{n \to \infty} f_\mu(u_n) = x
\]
if $u \in \partial \calU$, by dominated convergence.
\end{proof}

\subsection{The critical measure of the branching random walk}
\label{subsec:critical}

Let $V$ be a branching random walk satisfying \eqref{eqn:supercritical}, \eqref{eqn:boundary}, \eqref{eqn:variance} and \eqref{eqn:integrability}. We recall that we set $\T = \{ u \in \calU : V(u) \neq -\infty\}$. For any $n \in \N$, we denote by $\T(n) = \{ u \in \T : |u|=n\}$ the set of individuals alive at generation $n$. We introduce the filtration $(\calF_n)$, defined by
\[
  \calF_n = \sigma\left( V(u), u \in \mathcal{U} : |u| \leq n \right).
\]
Note that by definition of the branching random walk, if we set, for all $u \in \T$
\begin{equation}
  \label{eqn:subBrw}
  V^u : v \in \calU \mapsto V(u.v) - V(u)
\end{equation}
the branching random walk issued from particle $u$, then by definition, for all $n \in \N$, $\{V^u, u \in \T(n)\}$ is a family of i.i.d. branching random walks, with same law as $V$, that are independent from $\calF_n$. This fact is often called the branching property of the branching random walk.

We denote the boundary of the branching random walk by
\[
  \partial \T = \left\{ u \in \partial \calU : \forall n \in \N, u_n \in \T\right\}.
\]
An element of $\partial \T$ represent a spine of the tree: a semi-infinite path starting at and going away from the root in the tree $\T$.

The critical measure of the branching random walk has been introduced by the physicists Derrida and Spohn in \cite{DeS88}. Its existence is a consequence the precise study of the derivative martingale in \cite{AiS14}. This measure has been the subject of multiple studies \cite{BKNSW,Bur09,BDK16+}.

To define the critical measure, for any $u \in \T$, we set
\[
  Z^u_n = \sum_{|v|=n, v > u} (V(v)-V(u))e^{V(u)-V(v)} \quad \text{and} \quad Z^u_\infty = \liminf_{n \to \infty} Z^u_n.
\]
Thanks to the branching property, we observe $\left(Z^u_\infty, u \in \T(k)\right)$ are i.i.d. copies of $Z_\infty$, which are independent of $\calF_k$. Moreover, for any $k \leq n$, we have
\[
  Z_n = \sum_{|u|=k} e^{-V(u)} Z^u_{n} + \sum_{|u|=k} V(u)e^{-V(u)} \sum_{|v|=n,v>u} e^{V(u)-V(v)}.
\]
Letting $n \to \infty$ and using \eqref{eqn:cvMartingales}, we deduce $Z_\infty = \sum_{|u|=k} e^{-V(u)} Z^u_\infty$ a.s. More generally, almost surely, for all $u \in \T$ we have
\begin{equation}
  \label{eqn:flowDerivative}
  e^{-V(u)} Z_\infty^u = \sum_{j \in \N} e^{-V(u.j)}Z_\infty^{u.j}.
\end{equation}

In other words, the function
\[
  f^* : u \in \calU \mapsto \begin{cases} e^{-V(u)} Z_\infty^u & \quad \text{ if } u \in \T\\ 0 & \quad \text{otherwise,}\end{cases}
\]
is a.s. a flow on $\calU$. The critical measure of the branching random walk is the unique measure $\nu$ on $\bar{\calU}$ associated to the flow $f^*$, i.e.
\begin{equation}
  \label{eqn:defineCriticalMeasure}
  \forall u \in \calU, \quad \nu\left( B(u) \right) = \ind{u \in \T} e^{-V(u)} Z_\infty^u \quad \text{a.s.}
\end{equation}
Existence and uniqueness of $\nu$ are proved in Proposition~\ref{prop:flows}. Moreover, note that as $Z_\infty >0$ a.s. on the survival event $S$ of the branching random walk, the support of $\nu$ is a.s. the adherence of the boundary of the tree $\partial \T$, for the distance $d$.

\begin{remark}
\label{rem:cvCritical}
Note that the following convergence holds
\[
  \nu = \lim_{n \to \infty} \sum_{|u|=n} V(u) e^{-V(u)} \delta_{u} \quad \text{a.s.}
\]
for the topology of weak convergence of measures on $\bar{\calU}$, as for all $v \in \mathcal{U}$, we have
$ \displaystyle \lim_{n \to \infty} \sum_{|u|=n} \ind{u \in B(v)}  V(u) e^{-V(u)} = \nu(B(v))$ a.s. We conclude using the Portmanteau theorem.
\end{remark}

We end this section with a short proof that $\nu$ is non-atomic. We first note that that as $f^*$ is a (proper) flow, we have immediately $\nu(\{u\}) = 0$ a.s for all $u \in \calU$, therefore $\nu(\calU)=0$ a.s. As a result, the fact that $\nu$ is non-atomic is a consequence of the following lemma.
\begin{lemma}
\label{lem:nonAtomic}
Under assumptions \eqref{eqn:supercritical}, \eqref{eqn:boundary}, \eqref{eqn:variance} and \eqref{eqn:integrability}, we have
\[
  \lim_{n \to \infty} \max_{|u|=n} \nu(B(u)) = 0 \quad \text{a.s.}
\]
\end{lemma}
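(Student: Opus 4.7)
The approach is to prove convergence to $0$ in probability and then upgrade to a.s.\ convergence by monotonicity. I first observe that for any $v\in\T(n+1)$ with parent $u=\pi v$, the inclusion $B(v)\subseteq B(u)$ gives $\nu(B(v))\leq\nu(B(u))$, so the sequence $\bigl(\max_{|u|=n}\nu(B(u))\bigr)_{n\geq 0}$ is non-increasing and converges a.s.\ to some limit $L\geq 0$; it therefore suffices to show $L=0$ in probability.

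Fix $\epsilon>0$. Since $(Z^u_\infty,\,|u|=n)$ are i.i.d.\ copies of $Z_\infty$ and independent of $\calF_n$, a union bound conditional on $\calF_n$ yields
\[
\P\bigl(\max_{|u|=n}\nu(B(u))>\epsilon \mid \calF_n\bigr)\leq\sum_{|u|=n}\P\bigl(Z_\infty>\epsilon e^{V(u)}\bigr).
\]
I would invoke the standard polynomial tail estimate $\P(Z_\infty>t)\leq C/t$ for $t\geq T_0$, a classical property of critical smoothing-transform fixed points satisfied by $Z_\infty=\sum_{|u|=1}e^{-V(u)}Z^u_\infty$ on $\calG$. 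Setting $A=\log(T_0/\epsilon)$ and splitting according to whether $V(u)\leq A$ or $V(u)>A$,
\[
\sum_{|u|=n}\P\bigl(Z_\infty>\epsilon e^{V(u)}\bigr)\leq\#\{|u|=n:V(u)\leq A\}+\frac{C}{\epsilon}W_n.
\]
Both terms vanish a.s.\ as $n\to\infty$: $W_n\to 0$ by \eqref{eqn:cvMartingales}, and $\#\{|u|=n:V(u)\leq A\}=0$ for $n$ large, because by \cite{HuS09} the sequence $M_n-m_n$ has only logarithmic a.s.\ fluctuations while $m_n\to\infty$, so $M_n\to\infty$ a.s. Dominated convergence then gives $\P(L>\epsilon)=0$ for every $\epsilon>0$, hence $L=0$ a.s.

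The principal obstacle is the tail input $\P(Z_\infty>t)=O(1/t)$; while standard, it is not altogether elementary. If one wishes to avoid citing it, a variant using $\P(Z_\infty>t)\leq\E[Z_\infty\wedge 2t]/t$ together with a more careful analysis of the smoothing equation on $\calG$ under \eqref{eqn:integrability} yields the same split with only logarithmic corrections, which are still absorbed by $W_n\to 0$.
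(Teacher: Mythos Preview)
Your proof is correct and follows essentially the same route as the paper: a conditional union bound combined with the tail estimate $\P(Z_\infty>t)=O(1/t)$ to dominate $\P(\max_{|u|=n}\nu(B(u))>\epsilon\mid\calF_n)$ by a constant times $W_n\to 0$, followed by the monotonicity argument to upgrade to a.s.\ convergence. The only difference is that the paper invokes Madaule's bound $\P(Z_\infty\geq x)\leq c_1/x$ valid for all $x\geq 0$, which makes your split on $\{V(u)\leq A\}$ and the appeal to $M_n\to\infty$ unnecessary.
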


\begin{proof}
We first recall the precise estimate on the tail of $Z_\infty$ obtained by Madaule \cite{Mad16}: there exists $c_1>0$ such that for any $x \geq 0$, $\P(Z_\infty \geq x) \leq \frac{c_1}{x}$.

We now observe that for any $\epsilon>0$ and $n\in \N$, we have
\begin{align*}
  \P\left( \max_{|u|=n} \nu(B(u)) \geq \epsilon \middle| \calF_n \right) &\leq \sum_{|u|=n} \P\left( e^{-V(u)} Z^u_\infty \geq \epsilon \middle| \calF_n \right)\\
  &\leq \frac{c_1}{\epsilon} \sum_{|u|=n} e^{-V(u)} \text{ a.s,}
\end{align*}
thus $\lim_{n \to \infty} \P\left( \max_{|u|=n} \nu(B(u)) \geq \epsilon \middle| \calF_n \right) = 0$ a.s. by \eqref{eqn:cvMartingales}. As the sequence $(\max_{|u|=n} \nu(B(u)), n \geq 0)$ is non-increasing in $n$, we conclude that
\[\lim_{n \to \infty} \max_{|u|=n} B(u) = 0 \quad \text{ a.s.}  \tag*{\qedhere}\]
\end{proof}

\section{Convergence in law of the extremal process with genealogical informations}
\label{sec:cv}

Using the notation of the previous section, we can now state the main result of this paper, namely the convergence of the point measure
\begin{equation}
  \label{eqn:genalogicalExtremalProcess}
  \mu_n = \sum_{|u|=n} \delta_{u,V(u)-m_n},
\end{equation}
on $\bar{\calU} \times \R$. The sketch of proof is the following: we first define a candidate for the limiting measure, then observe that Fact~\ref{fct:Madaule} implies that $\mu_n$ converges toward this well-chosen limiting measure.

Independently from the branching random walk $V$, let $(\xi_n, n \geq 1)$ be the atoms of a Poisson point process with intensity $c_* e^{x}dx$, $(u^{(n)}, n \geq 1)$ be i.i.d. random variables with law $\bar{\nu}$ and $(D_n, n \geq 1)$ be i.i.d. point measures with law~$\calD$, with $c_*$ and $\calD$ defined in Fact~\ref{fct:Madaule}. For any $n \in \N$, we set
\begin{equation}
  \label{eqn:limitlExtremalProcess}
  \mu_\infty = \sum_{n=1}^{\infty} \sum_{d \in \mathcal{P}(D_n)} \delta_{u^{(n)},\xi_n + d-\log Z_\infty}.
\end{equation}
By classical properties of Poisson point processes, $( u^{(n)},\xi_n-\log Z_\infty)$ are the atoms of a Poisson point process with intensity $c_*\nu\otimes e^x dx$ on $\bar{\calU}\times\R$. Hence $\mu_\infty$ can alternatively be described as a Poisson point process with intensity $c_*\nu \otimes e^x dx$, with an i.i.d. decoration on the second coordinate. The main result of the article is the following convergence.
\begin{theorem}
\label{thm:main}
Assuming \eqref{eqn:supercritical}, \eqref{eqn:boundary}, \eqref{eqn:variance} and \eqref{eqn:integrability}, we have 
\[\lim_{n \to \infty} (\mu_n,Z_n) = (\mu_\infty,Z_\infty) \quad \text{ in law on $S$},\]
for the topology of the vague convergence.
\end{theorem}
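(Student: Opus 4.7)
The strategy is to apply Fact~\ref{fct:Madaule} to each of the independent subtrees descending from a fixed generation $k$, and then to let $k \to \infty$. For $u \in \T(k)$, set
\[\mu_n^u := \sum_{v > u,\, |v|=n} \delta_{v,\, V(v)-m_n},\]
so that $\mu_n = \sum_{u \in \T(k)} \mu_n^u$. By the branching property, conditionally on $\calF_k$ the measures $(\mu_n^u)_{u \in \T(k)}$ are independent, and each $\mu_n^u$ agrees, up to the reindexing $v = u.w \mapsto w$ on the first coordinate and the shift by $V(u)$ on the second, with the genealogical extremal process at generation $n-k$ of an independent copy of the branching random walk. A crucial observation about the Ulam--Harris embedding $\Psi$ is that, since every coordinate of $u$ is at least $1$, the ball $B(u)$ has diameter at most $3^{-k}$ in $\bar{\calU}$ for any $u \in \T(k)$.

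Fix continuous test functions $\phi$ on $\bar{\calU}$ and $\psi \in C_c(\R)$. Using the almost-constancy of $\phi$ on each $B(u)$ when $k$ is large, together with $m_n - m_{n-k} \to 0$, one can write
\[\mu_n^u(\phi \otimes \psi) = \phi(u)\,\tilde{\gamma}_{n-k}^u\bigl(\psi(\cdot + V(u))\bigr) + \mathrm{err}_k(u,n),\]
where $\tilde{\gamma}_{n-k}^u$ is the extremal process of the subtree rooted at $u$ (which has the law of $\gamma_{n-k}$), and $\mathrm{err}_k(u,n)$ is uniformly controlled by the modulus of continuity of $\phi$ on balls of diameter $3^{-k}$ and that of $\psi$ on $[m_{n-k}-m_n,0]$. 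Applying Fact~\ref{fct:Madaule} jointly to the subtrees rooted at $\T(k)$ (which are independent conditionally on $\calF_k$), we obtain that as $n \to \infty$, $(\mu_n, Z_n)$ converges in law to $(\mu^{(k)}, Z_\infty)$ up to an error vanishing as $k \to \infty$, where
\[\mu^{(k)} := \sum_{u \in \T(k)} \sum_{j \geq 1} \sum_{d \in \mathcal{P}(D_j^u)} \delta_{u,\, \xi_j^u + d - \log\nu(B(u))},\]
the $(\xi_j^u)_j$ are independent Poisson point processes of intensity $c_* e^x dx$, the $D_j^u$ are i.i.d.\ copies of $D$, and $\nu(B(u)) = e^{-V(u)} Z_\infty^u$. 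Equivalently, $\mu^{(k)}$ is a Poisson point process on $\bar{\calU} \times \R$ of intensity $c_* \nu^{(k)} \otimes e^x dx$ with $\nu^{(k)} := \sum_{u \in \T(k)} \nu(B(u)) \delta_u$, decorated on the real coordinate by i.i.d.\ copies of $D$.

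To conclude, I would let $k \to \infty$. By Remark~\ref{rem:cvCritical}, $\nu^{(k)}$ converges weakly to $\nu$ on the compact set $\bar{\calU}$, hence $\mu^{(k)} \to \mu_\infty$ jointly with $Z_\infty$. It remains to exchange the $n\to\infty$ and $k\to\infty$ limits via Laplace functionals: the transforms $\E[\exp(-\mu_n(\phi \otimes \psi)) g(Z_n)]$ and $\E[\exp(-\mu^{(k)}(\phi \otimes \psi)) g(Z_\infty)]$ can be made arbitrarily close, uniformly in large $n$, by choosing $k$ large, thanks to the uniform control of $\mathrm{err}_k(u,n)$ and the joint convergence of the finitely many large contributions.

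The main obstacle is this last diagonal argument together with its uniform control. It relies on Lemma~\ref{lem:nonAtomic} to ensure that the weights $\nu(B(u))$ at level $k$ become uniformly small, so that only finitely many decorations $D_j^u$ are shifted by $-\log \nu(B(u))$ into the compact support of $\psi$; on the tightness of $M_n - m_n$ to rule out lower-order contributions; and on Fact~\ref{fct:Madaule} applied subtree by subtree in the joint form $(\gamma_{n-k},Z_{n-k})$ to propagate the $\calF_k$-conditional convergence to an unconditional one. Once these uniform estimates are in place, the convergence $(\mu_n,Z_n) \to (\mu_\infty,Z_\infty)$ in law on $S$ for the vague topology follows from the standard Laplace-functional characterization of random point measures on a Polish space.
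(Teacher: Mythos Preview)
Your approach is correct in spirit and shares the same starting point as the paper --- decompose at generation $k$ and apply Fact~\ref{fct:Madaule} to each independent subtree --- but you make the argument considerably harder than necessary by testing against general continuous $\phi$ on $\bar{\calU}$ and then controlling an approximation error via a diagonal $k\to\infty$ argument.

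The paper avoids this entirely by exploiting a feature of the Ulam--Harris topology that you note but do not use to full effect: each $B(v)$ is both open and closed, so $\mathbf{1}_{B(v)}$ is already a continuous function on $\bar{\calU}$. Hence it suffices to prove, for every fixed $k$ and every $f\in C_c(\R)$, the joint convergence
\[
\bigl((\mu_n(\mathbf{1}_{B(v)}f))_{v\in\T(k)},\,Z_n\bigr)\ \Longrightarrow\ \bigl((\mu^v_\infty(f))_{v\in\T(k)},\,Z_\infty\bigr),
\]
which follows immediately from Fact~\ref{fct:Madaule} applied subtree by subtree (with the conditional independence given $\calF_k$). The family $\{B(v):v\in\calU\}$ is a countable $\pi$-system of clopen sets generating the Borel $\sigma$-field of $\bar{\calU}$, and every set in it is automatically a continuity set for $\mu_\infty$. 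By \cite[Theorem 14.16]{Kal02} this is already enough to conclude $(\mu_n,Z_n)\Rightarrow(\mu_\infty,Z_\infty)$ in the vague topology, with no $k\to\infty$ step at all.

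What your route buys is a more ``hands-on'' construction: you explicitly see $\mu_\infty$ arise as the $k\to\infty$ limit of the coarse-grained processes $\mu^{(k)}$, which is conceptually appealing. The cost is the uniform-in-$n$ control of your error terms and the exchange of limits, which you correctly identify as the main obstacle and only sketch. The paper's route trades this analytic work for one citation to Kallenberg, at the price of the limit identification being slightly less transparent. Both are valid; the paper's is shorter.
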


\begin{remark}
The genealogical informations encoded in $\mu_n$ only concern the local behavior in a neighborhood of the root of the process. Informally, we say that two individuals do not belong to the same family if the age of their most recent common ancestor if $O(1)$. However, we know that with high probability, for individuals close to the minimal displacement at time $n$, the age of their most recent common ancestor is either $O(1)$ or $n-O(1)$ with high probability (see e.g. \eqref{eqn:genealogy}). But to obtain informations on the genealogy within the group of the followers, different quantities should be considered, such as the branching random walk seen from the local leader, for the topology of local convergence.
\end{remark}

The convergence in Theorem~\ref{thm:main} can be interpreted as follows. We can decompose the extremal process at time $n$, near position $m_n$ into families of individuals whose common ancestor was alive at generation $n-O(1)$. In each of these families, there is a leader, a particle whose position is the smallest within the family. The point process of the leaders converge toward a Poisson point process with exponential intensity, and the relative positions of their relatives converge toward i.i.d. copies of a point process of law $\calD$. The fact that $\nu$ has no atom proves that with high probability, the most recent common ancestor between two individuals of two distinct families was alive at time $O(1)$.

This convergences gives some informations on the genealogical relationships for particles close to the smallest position at time $n$. For example, in the non-lattice case, if two particles $u$ and $v$ are at position $M_n$ at time $n$, they are close relatives with high probability. Note that this result would not hold in the lattice case, as observed by Pain \cite[Footnote 3]{Pai17+}.

\begin{proof}[Proof of Theorem~\ref{thm:main}]
This is a direct consequence of Madaule's convergence in law for the extremal process of the branching random walk with its genealogy. For any $v \in \T$, we denote by
\[\mu^v_\infty(.) = \int_{B(v)} \mu_\infty(du,.) = \sum_{k=1}^{\infty} \ind{u^{(k)} > v} \theta_{\xi_k-\log Z_\infty} D_k,\]
For every $k \in \N$, conditionally on $\calF_k$ and $(Z^v_\infty, |v|=k)$, we observe that $(\theta_{-V(v)} \mu^v_\infty, v \in \T(k))$ are independent SDPPP($c_*e^{x}dx,-\log Z^v_\infty,\calD$).

In other words, $\mu^v_\infty$ has the same law as the limit of the extremal process of the branching random walk $V^v$ issued from particle $v$, defined in \eqref{eqn:subBrw}. Thus, by Fact~\ref{fct:Madaule}, conditionally on $\calF_k$, for any $v \in \T(k)$, we have
\[
  \lim_{n \to \infty} \left(\sum_{|u|=n,u>v} \delta_{V(u)-V(v)-m_n}, Z^v_n \right) = \left(\theta_{-V(v)}\mu^v_\infty, Z^v_\infty\right) \text{ in law on $S$.}
\]

We denote by $f$ a continuous non-negative function on $\R$ with compact support and $k \in \N$. By the branching property, conditionally on $\calF_k$, the subtrees of the branching random walk rooted at points $v \in \T(k)$ behave as independent branching random walk. Therefore
\begin{align*}
  &\lim_{n \to \infty} \left(\left(\mu_n(\mathbf{1}_{B(v)}f), v \in \T(k)\right),Z_n \right)\\
  = &\lim_{n \to \infty} \left(\left(\sum_{|u|=n, u > v} f(V(u)-m_n),v \in \T(k)\right), \sum_{|v|=k} e^{-V(v)} Z_n^v \right)\\
  = &\left(\left(\mu^v_\infty(f), v \in \T(k)\right), Z_\infty\right) \quad \text{ in law on $S$}.
\end{align*}
By \cite[Theorem 14.16]{Kal02}, we conclude that $\lim_{n \to \infty} (\mu_n,Z_n) = (\mu_\infty,Z_\infty)$ in law on the survival event $S$.
\end{proof}

Using \cite[Theorem 10]{SuZ15}, we observe that writing $\hat{\mu}^x$ for a point measure with distribution $\theta_{-\min \mu_\infty} \mu_\infty$ conditionally on $\{\min \mu_\infty < -x\}$, we have
\[
 \lim_{x \to \infty} \hat{\mu}^x = D_1 \quad \text{ in law}.
\]
This result can be seen as a (weaker form of the) characterization of \cite{ABK13} of the law $\calD$. We provide an alternative characterization of this law in Section~\ref{sec:decoration}.

A straightforward consequence of Theorem~\ref{thm:main} is the convergence for the extremal process seen from the smallest position.
\begin{corollary}
\label{cor:main}
Under the same assumptions as Theorem~\ref{thm:main}, we set $\mathbf{e}$ a standard exponential random variable, $\zeta_1=0$ and $(\zeta_n, n \geq 2)$ a Poisson point process with intensity $\mathbf{e}e^{x}\ind{x>0}dx$. We have 
\[
  \lim_{n \to \infty} \left(M_n - m_n, \sum_{|u|=n} \delta_{u,V(u)-M_n}\right) = \left(\log(\mathbf{e}/Z_\infty), \sum_{d \in \mathcal{P}(D_n)} \delta_{u^{(n)},\zeta_n + d}\right),
\]
in law, on the survival event $S$.
\end{corollary}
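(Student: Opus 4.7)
The plan is to derive Corollary~\ref{cor:main} from Theorem~\ref{thm:main} by the continuous mapping theorem, applied to the operation of shifting the $\R$-coordinate of $\mu_n$ by its minimum. Theorem~\ref{thm:main} gives $(\mu_n, Z_n) \to (\mu_\infty, Z_\infty)$ in law on $S$ for the vague topology, and the quantity of interest $M_n - m_n$ is precisely the smallest $\R$-coordinate of an atom of $\mu_n$.

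The first step is to upgrade this to the joint convergence $(\mu_n, M_n - m_n, Z_n) \to (\mu_\infty, \min_y \mu_\infty, Z_\infty)$ in law on $S$. Vague convergence does not directly imply convergence of the minimum, since in principle an atom could escape to $-\infty$, but the sequence $(M_n - m_n)$ is tight by the Addario-Berry--Reed estimate, and $\min_y \mu_\infty = \xi_* - \log Z_\infty$ with $\xi_* = \min_k \xi_k$ is a.s.\ finite on $S$ since $Z_\infty > 0$ a.s.\ on $S$. Combining tightness with vague convergence on large compact intervals $[-K, K]$ yields the joint convergence.

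Next, one applies the continuous mapping theorem to the map $(\mu, m) \mapsto \sum_{(v,y) \in \mathcal{P}(\mu)} \delta_{v, y-m}$, which is continuous at pairs $(\mu, m)$ whose atoms have distinct $\R$-coordinates in a neighbourhood of $m$. Almost surely on $S$, the minimum $\xi_*$ is attained at a unique index $N$ (simplicity of the Poisson point process), and within the decoration $D_N$ the value $0 = \min D_N$ is attained at a unique atom; hence the map is continuous at $(\mu_\infty, \min_y \mu_\infty)$. This yields the convergence in law of $(M_n - m_n, \sum_{|u|=n} \delta_{u, V(u) - M_n})$ to the pair $(\min_y \mu_\infty, \sum_k \sum_{d \in \mathcal{P}(D_k)} \delta_{u^{(k)}, (\xi_k - \xi_*) + d})$.

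The final step identifies this limit with the distribution in the statement. Reindexing so $N=1$ and setting $\zeta_1 = 0$, $\zeta_k = \xi_k - \xi_*$ for $k \geq 2$, the mapping theorem and memoryless property of Poisson point processes show that, conditionally on $\xi_*$, the family $(\zeta_k)_{k \geq 2}$ is a Poisson point process on $(0, \infty)$ with intensity $c_* e^{\xi_*} e^x \, dx$. The tail calculation $\P(\xi_* > t) = \exp(-c_* e^t)$ yields that $\mathbf{e} := c_* e^{\xi_*}$ is standard exponential, so the intensity becomes $\mathbf{e} e^x \ind{x > 0} dx$ as in the statement, and the first coordinate $\xi_* - \log Z_\infty$ matches $\log \mathbf{e}$ up to the deterministic shift $-\log(c_* Z_\infty)$. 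The main obstacle is the first step: carefully ruling out atoms escaping to $-\infty$ so that vague convergence upgrades to genuine convergence of the minimum.
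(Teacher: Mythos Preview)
The paper does not actually prove Corollary~\ref{cor:main}; it merely asserts that it is ``a straightforward consequence of Theorem~\ref{thm:main}''. Your derivation via the continuous mapping theorem (shift the $\R$-coordinate of $\mu_n$ by its minimum) is exactly the natural way to make that assertion precise, and your identification of the crucial technical point --- ruling out atoms escaping to $-\infty$ by combining vague convergence with the tightness of $(M_n-m_n)$ from Addario-Berry--Reed/A\"id\'ekon --- is correct.

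Two remarks. First, your continuity hypothesis on the shift map is stronger than needed: the map $(\mu,m)\mapsto \theta_{-m}\mu$ is jointly continuous for the vague topology without any simplicity assumption on the atoms (if $f$ is continuous with compact support then $f(\cdot,\cdot-m_n)\to f(\cdot,\cdot-m)$ uniformly with supports in a common compact set). The simplicity of the minimising index $N$ is only needed to reindex so that the minimum corresponds to $n=1$.

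Second, and more importantly, your last line contains a genuine slip. You write that $\xi_*-\log Z_\infty$ matches $\log\mathbf e$ ``up to the deterministic shift $-\log(c_* Z_\infty)$''. This shift is \emph{not} deterministic: $Z_\infty$ is random. Your own computation gives $\mathbf e := c_* e^{\xi_*}$ standard exponential (and independent of $Z_\infty$), hence
\[
  \min_y \mu_\infty \;=\; \xi_* - \log Z_\infty \;=\; \log \mathbf e - \log(c_* Z_\infty),
\]
which does \emph{not} have the law of $\log\mathbf e$ alone. In other words, the first coordinate in the corollary as stated appears to omit the additive term $-\log(c_* Z_\infty)$; your argument is correct and in fact reveals that the statement should read $\log\mathbf e - \log(c_* Z_\infty)$ (consistent with A\"id\'ekon's convergence of $M_n-m_n$). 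You should flag this as a likely typo in the statement rather than absorb it into a ``deterministic'' shift.
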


\begin{remark}
If the law of the decoration $\mathcal{D}$ is explicit, it becomes possible to compute the asymptotic probability for two particles within $O(1)$ distance from the minimal displacement $M_n$ to belong to distinct families. For example, setting $u^{1,n}$ and $u^{2,n}$ the labels of the smallest two individuals at generation $n$, we have
\begin{align*}
  \lim_{n \to \infty} \P(|u^{1,n} \wedge u^{2,n}| \geq n/2 ) = \P( d_2 \leq \zeta_2) = \E(e^{-d_2}),
\end{align*}
where $d_2$ the second smallest point of $D_1$, as $\zeta_2$ is distributed as an exponential random variable with parameter $1$.
\end{remark}

In the next sections, we derive some additional informations of the genealogy of particles close to the minimal displacement at time $n$, that can be extracted from the convergence in Theorem~\ref{thm:main}.

\section{The supercritical Gibbs measure}
\label{sec:applications}

In this section, we use Theorem~\ref{thm:main} to give a simple construction of the so-called supercritical Gibbs measures on $\bar{\calU}$, as obtained in \cite{BRV12}. More precisely, the aim is to mimic the construction of the critical measure describe in Section~\ref{subsec:critical}, but instead of using the derivative martingale $(Z_n)$ one use the supercritical additive martingale with parameter $\beta >1$.

For any $\beta \geq 0$, we denote by
\[
  \kappa(\beta) = \log \E\left( \sum_{|u|=1} e^{-\beta V(u)} \right) \in (-\infty,\infty].
\]
For all $n \in \N$ and $\beta \geq 0$ such that $\kappa(\beta) < \infty$, we denote by
\[
  W_n^{(\beta)} = \sum_{|u|=n} e^{-\beta V(u) - n \kappa(\beta)}.
\]
By the branching property, $(W_n^{(\beta)}, n \geq 0)$ is a non-negative martingale.

If we assume that $W_\infty^{(\beta)}>0$ a.s. on the survival event of the branching random walk, then one can use the same techniques as in Section~\ref{subsec:critical} to define a finite measure on $\bar{\calU}$ such that $\nu_\beta(B(u)) = e^{-\beta V(u)} W_\infty^{(\beta),u}$, which we call the Gibbs measure of the branching random walk. To justify this name, observe that
\begin{equation}
  \label{eqn:gibbsMeasure}
  \lim_{n \to \infty} \frac{\sum_{|u|=n} e^{-\beta V(u)}\delta_u}{\sum_{|u|=n} e^{-\beta V(u)}} = \frac{\nu_\beta }{W_\infty(\beta)} \text{ a.s.}
\end{equation}
for the topology of weak convergence.

However, under assumption \eqref{eqn:boundary}, it is well-known that $\lim_{n \to \infty} W^{(\beta)}_n = 0$ a.s. for all $\beta > 1$ (see e.g. \cite{Lyo97}). Nevertheless, the aim of this section is to obtain a convergence similar to \eqref{eqn:gibbsMeasure} for $\beta > 1$, thus defining the supercritical Gibbs measure on $\bar{\calU}$.

Let $\beta > 1$, as $\lim_{n \to \infty} W_n^{(\beta)}=0$, one has to choose a different renormalization in order to obtain a non-degenerate limit. We set
\[
  W_{n,\beta} = n^{3\beta/2} e^{n \kappa(\beta)} W_n^{(\beta)} = \sum_{|v|=n} e^{\beta (m_n - V(v))}.
\]
Madaule \cite[Theorem 2.3]{Mad15} proved there exists a random variable $W_{\beta,\infty}$ defined on the same probability space as $Z_\infty$ such that
\[\lim_{n \to \infty} (W_{n,\beta},Z_n) = (W_{\infty, \beta},Z_\infty) \text{ in law,}\]
with $W_{\infty,\beta}$ and $Z_\infty$ being a.s. either both positive or both null. For all $u \in \T$, we set
\[
  W_{n,\beta}^u = \sum_{|v|=n, v > u} e^{\beta (m_n + V(u) - V(v))},
\]
We construct a measure which gives mass $W_{\infty,\beta}^u$ to the ball $B(u)$ for all $u \in \T$. This measure is then used to study the so-called overlap of the branching random walk.

We recall that $(u^{(n)})$ are i.i.d. random elements of $\bar{\calU}$ sampled with law~$\bar{\nu}$. We denote by $(\xi^\beta_n, n \geq 1)$ the atoms of a Poisson point process with intensity $c_\beta e^x dx$, where we write~$c_\beta = c_* \E\left( \sum_{d \in D} e^{-\beta d} \right)$. We introduce the random measures on $\bar{\calU}$ defined by
\begin{equation}
  \label{eqn:supercriticalMeasure}
  \nu_{\beta,n} = \sum_{|u|=n} e^{\beta (m_n - V(u))} \delta_u \quad \text{and} \quad \nu_{\beta,\infty} = \sum_{n \in \N} Z_\infty^\beta e^{-\beta \xi^\beta_n} \delta_{u^{(n)}}.
\end{equation}
\begin{theorem}
\label{thm:cvSupercritical}
Assuming \eqref{eqn:supercritical}, \eqref{eqn:boundary}, \eqref{eqn:variance} and \eqref{eqn:integrability}, for any $\beta > 1$, we have
\[
  \lim_{n \to \infty} \nu_{\beta,n} = \nu_{\beta,\infty}\quad \text{ in law,}
\]
for the topology of the weak convergence of measures.
\end{theorem}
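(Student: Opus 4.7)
My plan is to deduce the theorem from Theorem~\ref{thm:main} by writing $\nu_{\beta,n}$ as the image of $\mu_n$ under the map $\Phi(\mu)(du) := \int_\R e^{-\beta x}\,\mu(du,dx)$ that integrates out the real coordinate against the weight $e^{-\beta x}$. By definition $\nu_{\beta,n} = \Phi(\mu_n)$, and unfolding~\eqref{eqn:limitlExtremalProcess} gives
\[
\Phi(\mu_\infty) \;=\; Z_\infty^\beta \sum_{n\geq 1} e^{-\beta\xi_n}\Bigl(\sum_{d\in\mathcal{P}(D_n)} e^{-\beta d}\Bigr)\delta_{u^{(n)}}.
\]
The proof then splits into two pieces: (i) passing $\Phi$ through the limit of Theorem~\ref{thm:main}, and (ii) identifying $\Phi(\mu_\infty)$ in law with $\nu_{\beta,\infty}$.

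The obstacle for (i) is that $e^{-\beta x}$ is neither bounded nor compactly supported, so $\Phi$ is not continuous for the vague topology. I would handle this by truncation: for $K\in\R$, the map $\Phi_K(\mu)(du) := \int e^{-\beta x}\mathbf{1}_{x\leq K}\,\mu(du,dx)$ is continuous on the set of point measures that do not charge $\bar{\calU}\times\{K\}$, so Theorem~\ref{thm:main} and the continuous mapping theorem yield $\Phi_K(\mu_n)\to\Phi_K(\mu_\infty)$ in law on $S$ for every such $K$. The tail is controlled uniformly in $n$ using any intermediate exponent $\beta'\in(1,\beta)$:
\[
\sum_{|u|=n} e^{-\beta(V(u)-m_n)}\mathbf{1}_{V(u)-m_n>K} \;\leq\; e^{-(\beta-\beta')K}\,W_{n,\beta'},
\]
and $(W_{n,\beta'})_n$ is tight by \cite[Theorem 2.3]{Mad15}; a parallel bound applies to $\Phi(\mu_\infty)-\Phi_K(\mu_\infty)$. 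A standard triangular argument then upgrades the truncated convergence to $\nu_{\beta,n}=\Phi(\mu_n)\to\Phi(\mu_\infty)$ in law for the weak topology on finite measures on the compact space $\bar{\calU}$.

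For (ii), since the $(u^{(n)})$ are i.i.d.\ of law $\bar\nu$ independent of $(\xi_n,D_n)$, exchangeability reduces the identification of $\Phi(\mu_\infty)$ with $\nu_{\beta,\infty}$ to comparing the point processes of weights $\{e^{-\beta\xi_n}\sum_{d\in\mathcal{P}(D_n)} e^{-\beta d}\}_{n\geq 1}$ and $\{e^{-\beta\xi_n^\beta}\}_{n\geq 1}$ on $(0,\infty)$. This is a direct displacement/marking computation for the marked Poisson process $(\xi_n,D_n)$ of intensity $c_* e^x dx\otimes \calD$, and this is where the definition of $c_\beta$ enters. The main difficulty is thus piece (i), specifically the vague discontinuity of $\Phi$, and the hypothesis $\beta>1$ is essential: it leaves room for the intermediate exponent $\beta'\in(1,\beta)$ in the tail bound, to which Madaule's convergence of $W_{n,\beta'}$ can be applied.
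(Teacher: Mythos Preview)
Your overall strategy is sound, but there is a technical gap in step (i). The truncated functional $\Phi_K$ you define is \emph{not} continuous for the vague topology on point measures, even restricted to measures that do not charge $\bar{\calU}\times\{K\}$: the weight $e^{-\beta x}$ blows up as $x\to-\infty$, and vague convergence allows mass to escape to $-\infty$. For instance, $\delta_{(u_0,-n)}\to 0$ vaguely while $\Phi_K(\delta_{(u_0,-n)})=e^{\beta n}\delta_{u_0}\to\infty$. Thus Theorem~\ref{thm:main} plus the continuous mapping theorem do not yield $\Phi_K(\mu_n)\to\Phi_K(\mu_\infty)$ as stated. The fix is easy: truncate on both sides, using $\Phi_{L,K}(\mu)(du):=\int e^{-\beta x}\mathbf{1}_{-L\leq x\leq K}\,\mu(du,dx)$, which \emph{is} vaguely continuous at measures not charging $\bar{\calU}\times\{-L,K\}$. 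The left tail is controlled by the tightness of $M_n-m_n$ from \cite{Aid13}, since on $\{M_n-m_n\geq -L\}$ one has $\Phi(\mu_n)=\Phi_{L,\infty}(\mu_n)$; the analogous control for $\mu_\infty$ uses that $\min_k\xi_k-\log Z_\infty>-\infty$ a.s. With this amendment your argument goes through.

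By contrast, the paper's proof sidesteps the truncation issue entirely: it invokes \cite[Theorem~2.3]{Mad15} directly at parameter $\beta$ (not at an auxiliary $\beta'<\beta$) on each subtree rooted at $u\in\T(k)$, which already gives the convergence in law of $\nu_{\beta,n}(B(u))$; Theorem~\ref{thm:main} is then used only to identify the joint limit as $(\nu_{\beta,\infty}(B(u)))_{u\in\T(k)}$, via the same Poisson displacement computation you sketch in (ii). Your route is more self-contained in that it extracts the convergence itself from Theorem~\ref{thm:main}, at the price of the additional left-tail tightness input; the paper's route is shorter but leans more heavily on Madaule's result as a black box.
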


Note that this convergence is similar to the one observed for the critical measure in Remark~\ref{rem:cvCritical}. However, the convergence holds in distribution, and not almost surely. We also have $\nu_{\beta,\infty}(B(u)) = W_{\infty, \beta}^u$ in law, for all $u \in \T$.

\begin{proof}
By \cite[Theorem 2.3]{Mad15}, $\nu_{\beta,n}(B(u))$ converges in law for any $u \in \T$ as $n \to\infty$. Consequently, using Theorem~\ref{thm:main}, for any $u \in \T$, we can identify the law of the limit as
\begin{align*}
   \lim_{n \to \infty} \nu_{\beta,n}(B(u)) &= \sum_{k=1}^{\infty} \ind{u^{(k)}>u} \sum_{d \in \mathcal{P}(D_k)} Z_\infty^\beta e^{-\beta (\xi_k + d)}\quad \text{ in law,}\\
  &= Z_\infty^\beta \sum_{k=1}^{\infty} \ind{u^{(k)}>u} e^{-\beta \xi_k}\sum_{d \in \mathcal{P}(D_k)}  e^{-\beta d}.
\end{align*}
Setting $X^\beta_k = -\frac{1}{\beta} \log \sum_{d \in \mathcal{P}(D_k)} e^{-\beta d}$, we have
\[
  \lim_{n \to \infty} \nu_{\beta,n}(B(u)) = \sum_{k=1}^{\infty} \ind{u^{(k)}>u} Z_\infty^\beta e^{-\beta (\xi_k + X^\beta_k)} \quad \text{ in law}.
\]
Moreover, as $(\xi_k+X^\beta_k, k \in \N)$ are the atoms of a Poisson point process with intensity $c_\beta  e^{x} dx$ independent of $(u^{(k)})$, we conclude that for any $j \in \N$,
\[
  \lim_{n \to \infty} \left(\nu_{\beta,n}(B(u)), u \in \T(j) \right) = \left( \nu_{\beta,\infty}(B(u)), u \in \T(j)\right) \quad \text{ in law},
\]
which concludes the proof.
\end{proof}

\begin{remark}
A straightforward consequence of this result is that under assumptions \eqref{eqn:supercritical}, \eqref{eqn:boundary}, \eqref{eqn:variance} and \eqref{eqn:integrability}, the solution $Y_\beta$ of the supercritical smoothing transform (see e.g. \cite{ABM})
\[
  Y_\beta \egaldistr \sum_{|u|=1} e^{-\beta V(u)} Y^{(u)}_\beta
\]
can be written $Y_\beta = Z_\infty^\beta\sum_{k=1}^{\infty} e^{-\beta \xi^\beta_k}$.
\end{remark}

Theorem~\ref{thm:cvSupercritical} indirectly implies a proof of the conjecture of Derrida and Spohn \cite{DeS88}: the rescaled distribution of the genealogy of the most recent common ancestor of two particles chosen independently at random according to the measure $\nu_{n,\beta}/\nu_{n,\beta}(\bar{\calU})$ converges in law toward a random measure on $[0,1]$ with no mass on $(0,1)$. 
\begin{theorem}
\label{thm:DeS88}
For any $n \in \N$ and $\beta > 1$, we set
\begin{equation}
  \omega_{n,\beta} = W_{n,\beta}^{-2}\sum_{|u|=|v|=n} e^{\beta (2 m_n -  V(u)-V(v))} \delta_{|u\wedge v|/n}.
\end{equation}
Assuming \eqref{eqn:supercritical}, \eqref{eqn:boundary}, \eqref{eqn:variance} and \eqref{eqn:integrability}, conditionally on $S$, we have 
\[
  \lim_{n \to \infty} \omega_{n,\beta} = (1-\pi_\beta) \delta_0 + \pi_\beta \delta_1 \quad \text{in law,}
\]
where $\pi_\beta = \sum_{k = 1}^{\infty} p^2_k$ and $(p_k, k \geq 1)$ is a Poisson-Dirichlet mass partition with parameters $(\beta^{-1},0)$.
\end{theorem}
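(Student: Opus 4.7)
The plan is to reduce the weak convergence of the random probability measures $\omega_{n,\beta}$ to the asymptotic analysis of the tails $T_\ell := \omega_{n,\beta}\bigl(\{t\in[0,1]:tn\ge\ell\}\bigr)$. Grouping pairs $(u,v)$ at generation $n$ according to the depth of $u\wedge v$ yields the identity
\[
 T_\ell = \sum_{|z|=\ell}\left(\frac{W_{n,\beta}^z}{W_{n,\beta}}\right)^2 = \sum_{|z|=\ell}\left(\frac{\nu_{\beta,n}(B(z))}{\nu_{\beta,n}(\bar\calU)}\right)^2, \qquad 0\le\ell\le n,
\]
which entirely encodes $\omega_{n,\beta}$, since $\omega_{n,\beta}(\{\ell/n\}) = T_\ell - T_{\ell+1}$. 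The prospective limit $(1-\pi_\beta)\delta_0+\pi_\beta\delta_1$ has continuity points at every $t\in(0,1)$ and $\ell\mapsto T_\ell$ is non-increasing, so the theorem reduces to showing $T_{\lceil tn\rceil}\to \pi_\beta$ in law for each $t\in(0,1)$. I will establish this by sandwiching $T_{n-K}\le T_{\lceil tn\rceil}\le T_k$ (valid once $k\le tn\le n-K$) and proving that both ends converge to $\pi_\beta$ in the iterated limit $n\to\infty$ followed by $k,K\to\infty$.

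The upper bound will rest on Theorem~\ref{thm:cvSupercritical}. Since each $B(z)$ is both open and closed in the Cantor topology of $\bar\calU$, the functional $\nu\mapsto\sum_{|z|=k}\nu(B(z))^2/\nu(\bar\calU)^2$ is continuous for the weak convergence of measures, so $T_k$ converges in distribution to $S_k^\infty := \sum_{|z|=k}\nu_{\beta,\infty}(B(z))^2/W_{\infty,\beta}^2$. The measure $\nu_{\beta,\infty}$ is purely atomic, with atoms at the distinct points $u^{(n)}\in\partial\calU$ carrying masses $Z_\infty^\beta e^{-\beta\xi_n^\beta}$; because $|u^{(n)}\wedge u^{(m)}|<\infty$ a.s.\ for $n\ne m$, the level-$k$ balls eventually separate the atoms, so that $S_k^\infty$ decreases to $\sum_n e^{-2\beta\xi_n^\beta}/\bigl(\sum_n e^{-\beta\xi_n^\beta}\bigr)^2$. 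Under the change of variable $y=e^{-\beta\xi_n^\beta}$ the exponential intensity $c_\beta e^x\,dx$ becomes the power-law $\propto y^{-1-1/\beta}\,dy$, which is the standard stable construction of a Poisson-Dirichlet mass partition of parameters $(\beta^{-1},0)$; the limit therefore equals $\sum_k p_k^2=\pi_\beta$.

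For the lower bound I will use Theorem~\ref{thm:main}. The process $\mu_\infty$ decomposes into clusters indexed by the leaders $u^{(j)}$ with normalized $\beta$-masses $p_j$, and $\sum_j p_j^2=\pi_\beta$. Fix $\epsilon>0$ and $J$ large enough that $\sum_{j\le J}p_j^2\ge\pi_\beta-\epsilon$ with large probability. The first $J$ leaders are pairwise distinct in $\bar\calU$, so there is a finite random $K_0$ beyond which their level-$(n-K)$ prefixes $z_1,\ldots,z_J$ are themselves distinct for every $K\ge K_0$ and $n$ large. Moreover, by the close-relative description of the decoration $\calD$ (Remark~\ref{rem:genealogyExtremal}, made precise in Section~\ref{sec:decoration}), after further enlarging $K$ every particle contributing to cluster $j$ in $\mu_n$ is a descendant of $z_j$, so that $W_{n,\beta}^{z_j}/W_{n,\beta}$ converges in law to $p_j$. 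This yields
\[
 T_{n-K}\ge \sum_{j=1}^J\left(\frac{W_{n,\beta}^{z_j}}{W_{n,\beta}}\right)^2 \longrightarrow \sum_{j=1}^J p_j^2 \quad\text{in law as }n\to\infty,
\]
and letting $J\to\infty$ gives $\liminf T_{n-K}\ge\pi_\beta$ in distribution.

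The hard part will be the lower bound, specifically the claim that each cluster concentrates in a single depth-$K$ subtree as soon as $K$ is large. This is exactly the close-relative interpretation of $\calD$ developed in Section~\ref{sec:decoration}; the ensuing bookkeeping to convert cluster-by-cluster convergence into the displayed uniform lower bound is the principal technical ingredient. Once both sandwich estimates are in place, $T_{\lceil tn\rceil}\to\pi_\beta$ in law at every $t\in(0,1)$, and the weak convergence $\omega_{n,\beta}\to(1-\pi_\beta)\delta_0+\pi_\beta\delta_1$ on the survival event $S$ follows.
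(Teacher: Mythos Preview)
Your reduction to showing $T_{\lceil tn\rceil}\to\pi_\beta$ in law for every $t\in(0,1)$ is correct and matches the paper's first step. The upper bound via $T_k$ is also essentially the paper's argument: the paper writes $\Lambda_n^k=T_k$, uses Theorem~\ref{thm:main} (you use its corollary Theorem~\ref{thm:cvSupercritical}, which is fine) to pass to the limit, then Lemma~\ref{lem:nonAtomic} and the Pitman--Yor representation to obtain $\pi_\beta$.

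The lower bound, however, has a genuine gap. You propose to control $T_{n-K}$ by locating, for each of the first $J$ clusters of the limit $\mu_\infty$, a vertex $z_j$ at level $n-K$ whose subtree captures that cluster's contribution to $W_{n,\beta}$. Two problems arise. First, the leaders $u^{(j)}$ live in the limiting object $\mu_\infty$, built with auxiliary randomness independent of the branching random walk; the convergence $\mu_n\to\mu_\infty$ is only in law, so the expression ``level-$(n-K)$ prefix of $u^{(j)}$'' and the statement ``$W_{n,\beta}^{z_j}/W_{n,\beta}\to p_j$'' have no meaning without a coupling, and even under a Skorokhod representation there is no canonical notion of ``cluster $j$ in $\mu_n$'' to which this could apply. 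Second, and more importantly, Section~\ref{sec:decoration} does not give what you need: Theorem~\ref{thm:abbs} and Corollary~\ref{cor:abbs} concern only the cluster of the minimizer $\hat u_n$, not all clusters. The input of Section~\ref{sec:decoration} is precisely the genealogy estimate~\eqref{eqn:genealogy}, which says pairs of near-minimal particles do not branch in $[k,n-k]$; extending your argument to all clusters would amount to re-proving a weighted version of this estimate for the full $\beta$-mass, which is exactly what the paper isolates as Lemma~\ref{lem:entangled}.

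The paper's route avoids this circularity: rather than studying $T_{n-K}$, it writes $\omega_{n,\beta}((t,1])=\Lambda_n^k-\Delta_n^{k,t}$ where $\Delta_n^{k,t}$ is the normalized $\beta$-weight of pairs with $|u\wedge v|\in[k,tn]$, and proves directly (Lemma~\ref{lem:entangled}) that $\Delta_n^{k,t}\to 0$ in probability in the iterated limit. This is a quantitative branching-random-walk computation using moment bounds on the set of near-minimal particles; it does not rely on the decoration description at all. So the hard technical ingredient is not derivable from Section~\ref{sec:decoration} but rather parallel to it, and your plan would ultimately have to prove something equivalent to Lemma~\ref{lem:entangled}.
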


A similar result was already known for multiple types of Gaussian processes with a logarithmic correlation structure, such as the Generalized Random Energy Model \cite{BoK}, log-correlated Gaussian fields such as the Gaussian Free Field \cite{ArZ}, or the binary branching random walk with Gaussian increments \cite{Jag}. More precisely, it is proved that a measure similar to $\nu_{n,\beta}/\nu_{n,\beta}(\bar{\mathcal{U}})$ converges in law toward a Ruelle probability cascade. Ouimet \cite{Oui} recently extended this family of results to Gaussian fields with scale-dependent variance. Theorems~\ref{thm:cvSupercritical} and~\ref{thm:DeS88} represent extensions of these results to branching random walks with non-Gaussian increments. Contrarily to what was done in this past literature, the proof relies on the study of the extremal point process instead of proving Ghirlanda-Guerra type identities. Thus Poisson-Dirichlet distributions appear as simple functional of a Poisson point process instead of the application of Talagrand's identity (see \cite[Remark 3.8]{Jag}).

\begin{proof}
We first observe that it is enough to prove that conditionally on $S$,
\begin{equation}
  \label{eqn:target} \forall t \in (0,1) \quad \lim_{n \to \infty} \omega_{n,\beta}((t,1]) = \pi_\beta\quad \text{in law}.
\end{equation}
Indeed, the function $t \mapsto \omega_{n,\beta}((t,1])$ is decreasing on $[0,1]$, therefore \eqref{eqn:target} and Slutsky's lemma imply the convergence of the finite-dimensional distributions of the tail of $\omega_{n,\beta}$, which concludes the proof.

For $k \leq n $ and $t \in [0,1]$, we set
\begin{align*}
  \Lambda_n^k &= W_{n,\beta}^{-2}\sum_{|u|=|v|=n} e^{2\beta m_n-\beta V(u)-\beta V(v)} \ind{|u\wedge v| \geq k}\\
  \text{and}\quad \Delta_n^{k,t} &= W_{n,\beta}^{-2}\sum_{|u|=|v|=n} e^{2\beta m_n-\beta V(u)-\beta V(v)} \ind{|u\wedge v| \in [k,tn]}.
\end{align*}
We observe that for every $k \in [1,tn) \cap \N$, we have
\begin{equation}
  \label{eqn:gendarme}
  \Lambda_n^k -\Delta_n^{k,t} \leq \omega_{n,\beta}((t,1]) \leq \Lambda_n^k  \quad \text{a.s. on S.}
\end{equation}

By Theorem~\ref{thm:main}, as $S = \{ Z_\infty > 0\}$ a.s. we have
\[
  \lim_{n \to \infty} \Lambda_n^k = \Lambda_\infty^k := \frac{ \sum_{|u|=k} \left(\sum_{j=1}^{\infty} \ind{u^{(j)}>u} e^{-\beta \xi^\beta_j} \right)^2 }{\left( \sum_{j=1}^{\infty}  e^{-\beta \xi^\beta_j} \right)^2} \quad \text{in law on $S$.}
\]
Moreover, as $\nu$ is non-atomic by Lemma~\ref{lem:nonAtomic}, letting $k \to \infty$ we have
\[ \lim_{k \to \infty} \Lambda_\infty^k = \frac{\sum_{j=1}^{\infty} e^{-2\beta \xi^\beta_j}}{\left( \sum_{j=1}^{\infty} e^{-\beta \xi^\beta_j} \right)^2} \quad \text{ a.s.}\]
Using \cite[Proposition 10]{PiY97}, we have $\lim_{k \to \infty} \Lambda_\infty^k = \pi_\beta$ a.s.

We now study the asymptotic behaviour of $\Delta_n^{k,t}$, more precisely we prove that for any $\delta > 0$,
\begin{equation}
  \label{eqn:deltato0}
  \lim_{k \to \infty} \limsup_{n \to \infty} \P\left( \Delta_n^{k,t} > \delta, S \right) = 0.
\end{equation}
By \cite[Theorem 2.3]{Mad15}, $\displaystyle  \lim_{\epsilon \to 0}  \lim_{n \to \infty} \P\left( n^{3\beta/2} W_{n,\beta} \leq \epsilon, S \right) = 0$, therefore it is enough to prove that for any $\epsilon > 0$,
\begin{equation}
  \label{eqn:epsilonto0}
  \lim_{k \to \infty} \limsup_{n \to \infty} \P\left( \sum_{|u|=|v|=n} \ind{|u \wedge v| \in [k,tn]} e^{\beta(m_n-V(u)) + \beta(m_n-V(v))} > \delta \epsilon^2 \right)= 0.
\end{equation}
The proof of this result, rather technical, is postponed to Lemma~\ref{lem:entangled}.

Let $x \in [0,1]$ and $\delta > 0$, using \eqref{eqn:gendarme}, we have
\[
  \P(\Lambda_n^k \leq x + \delta, S) + \P(\Delta_n^{k,t} \geq \delta,S) \geq \P\left( \omega_{n,\beta}((t,1]) \leq x,S \right) \geq \P\left( \Lambda_n^k \leq x,S \right).
\]
Thus, letting $n$ then $k$ grows to $\infty$ and using \eqref{eqn:deltato0}, for any $t \in (0,1)$, $\omega_{n,\beta}((t,1])$ converges in law toward $\pi_\beta$ on $S$, proving \eqref{eqn:target}.
\end{proof}

Note that this proof can easily be adapted to the convergence of the overlap measure of more than two particles.

\begin{remark}
With similar computations, we can obtain a ``local limit'' convergence for the genealogy of two particles sampled according to the Gibbs measure. In effect, if we consider the non-rescaled measure
\[
  \lambda_{n,\beta} = W_{n,\beta}^{-2} \sum_{|u|=|v|=n} e^{\beta (2 m_n - V(u)-V(v))} \delta_{|u \wedge v|},
\]
we obtain $\lim_{n \to \infty} \lambda_{n,\beta} = \lambda_{\infty,\beta}$ in law on $S$, where $(p_k)$ is a Poisson-Dirichlet distribution with parameters $(\beta^{-1},0)$ and $\lambda_{\infty,\beta} = \sum_{k,k'=1}^{\infty} p_k p_{k'} \delta_{|u^{(k)}\wedge u^{(k')}|}$. Note that $\lambda_{\infty,\beta}(\{\infty\}) = \pi_\beta$.
\end{remark}

\begin{remark}
\label{rem:gibbssub(cri)tical}
Chauvin and Rouault \cite{CR} studied similarly the overlap of subcritical measures, such that $\beta < 1$. They proved that in this case, the measure $\omega_{n,\beta}$ converges toward $\delta_0$, and the measure $\lambda_{n,\beta}$ converges toward a proper probability measure on $\N$. For the critical case, Pain \cite{Pai17+} proves that if $(\beta_n)$ is a sequence converging to $1$, then $\lim_{n \to \infty} \omega_{n,\beta_n} = \delta_0$ in probability.
\end{remark}

\section{The decoration as the close relatives of minimal displacement}
\label{sec:decoration}

In this section, we prove that the law $\calD$ is the limiting distribution of the relative positions of the family of the particle that realizes the minimal displacement at time $n$. This result is similar to the one obtained in \cite{ABBS13} for branching Brownian motion. For any $n \in \N$, we denote by $\hat{u}_n$ a particle alive at time $n$ such that $V(u) = M_n$, for example the one which is the smallest for the lexicographical order on $\calU$.
\begin{theorem}
\label{thm:abbs}
For any $n \in \N$ and $k < n$, we set
\begin{equation}
  \rho_{n,k} = \sum_{|u|=n} \ind{|u \wedge \hat{u}_n| \geq k} \delta_{V(u)-M_n}.
\end{equation}
Under the assumptions of Theorem~\ref{thm:main}, we have 
\[\lim_{k \to \infty} \lim_{n \to \infty} \rho_{n,k} = \lim_{k \to \infty} \tilde{\lim_{n \to \infty}} \rho_{n,n-k} = D_1 \quad \text{in law,}\]
where $\displaystyle \tilde{\lim_{n \to \infty}} \rho_{n,n-k}$ represents any accumulation point for the sequence $(\rho_{n,n-k})$ as $n \to \infty$.
\end{theorem}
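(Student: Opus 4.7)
The plan is to identify both iterated limits with the cluster attached to the minimum of $\mu_\infty$ from Theorem \ref{thm:main}. Since $\min D = 0$ almost surely, the minimum of $\mu_\infty$ is almost surely attained at a unique index $k^* = \arg\min_j \xi_j$, with value $\min\mu_\infty = \xi_{k^*} - \log Z_\infty$; the cluster of $\mu_\infty$ sharing the first coordinate $u^{(k^*)}$, after translating its second coordinate by $-\min\mu_\infty$, has law $\calD$.

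For $\lim_k \lim_n \rho_{n,k}$, I would combine Theorem \ref{thm:main} with Corollary \ref{cor:main} and a Skorokhod realisation. Given that the minimal atom of $\mu_\infty$ is almost surely unique, one has $\hat{u}_n \to u^{(k^*)}$ in $\bar{\calU}$, so for any fixed $k$ the equality $(\hat{u}_n)_k = (u^{(k^*)})_k$ holds for all $n$ sufficiently large. Translating the condition $|u \wedge \hat{u}_n| \geq k$ into $u^{(j)} \in B((u^{(k^*)})_k)$ in the limit, continuous mapping yields
\[
\lim_{n \to \infty} \rho_{n,k} \egaldistr \sum_{j \geq 1} \ind{u^{(j)} \in B((u^{(k^*)})_k)}\, \theta_{\xi_j - \xi_{k^*}} D_j.
\]
Since $\bar{\nu}$ is non-atomic (Lemma \ref{lem:nonAtomic}), the i.i.d. samples $(u^{(j)})$ are almost surely pairwise distinct, so $d(u^{(j)}, u^{(k^*)}) > 0$ for $j \neq k^*$. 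As the diameter of $B((u^{(k^*)})_k)$ tends to $0$, only the term $j = k^*$ survives for $k$ large, giving $D_{k^*}$, which has law $\calD$.

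For $\lim_k \tilde{\lim}_n \rho_{n, n-k}$, the truncation depth $n - k$ grows with $n$, so Theorem \ref{thm:main} does not apply directly. The bridge is the dichotomy announced in the remark following Theorem \ref{thm:main} (equation \eqref{eqn:genealogy}): for any $\epsilon, A > 0$ there exists $k_0$ such that, for $n$ large,
\[
\P\left(\exists\, |u|=n:\ V(u) - M_n \leq A,\ k_0 \leq |u \wedge \hat{u}_n| \leq n - k_0\right) \leq \epsilon.
\]
For a continuous non-negative $f$ supported in $[0, A]$ and any $k \geq k_0$, for $n$ large,
\[
\rho_{n, k_0}(f) - \rho_{n, n-k}(f) = \sum_{|u|=n,\, k_0 \leq |u \wedge \hat{u}_n| < n-k} f(V(u) - M_n)
\]
vanishes off the event in the dichotomy. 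Tightness of $(\rho_{n, n-k})_n$ follows from domination by the sequence of Corollary \ref{cor:main}; extracting a subsequence realising an accumulation point $\rho_\infty^{(k)}$ and using that $\rho_{n, k_0}$ converges in law to $\sigma^{(k_0)} := \lim_n \rho_{n, k_0}$ (by the first part), one deduces $\rho_\infty^{(k)}(f) \egaldistr \sigma^{(k_0)}(f)$ for every $k \geq k_0$. Letting $k_0 \to \infty$, the first limit gives $\sigma^{(k_0)}(f) \to D_1(f)$ in law, hence $\rho_\infty^{(k)}(f) \to D_1(f)$ as $k \to \infty$.

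The main obstacle is establishing the dichotomy \eqref{eqn:genealogy}; once granted, the rest is a soft combination of Theorem \ref{thm:main}, the non-atomicity of $\bar{\nu}$, and continuous mapping. This dichotomy should follow from a first-moment estimate based on standard ballot-type bounds for branching random walk trajectories that approach the leftmost path, ruling out branchings in the intermediate regime $k_0 \leq |u \wedge \hat{u}_n| \leq n - k_0$.
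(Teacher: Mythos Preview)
Your proposal follows essentially the same route as the paper. The paper splits the proof into two lemmas: Lemma~\ref{lem:oneSideabbs} handles $\lim_k\lim_n\rho_{n,k}$ using Fact~\ref{fct:Madaule} applied conditionally on $\calF_k$ together with the non-atomicity of $\nu$ (Lemma~\ref{lem:nonAtomic}), and Lemma~\ref{lem:otherSideabbs} handles the accumulation points of $\rho_{n,n-k}$ by comparing with $\rho_{n,k}$ via the genealogical dichotomy \eqref{eqn:genealogy}, exactly as you do. The dichotomy is not proved in the paper either; it is quoted from \cite{Mal16+}.

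Two small differences are worth noting. For the first limit, you work through Theorem~\ref{thm:main} and a Skorokhod coupling to track $\hat u_n\to u^{(k^*)}$, whereas the paper bypasses this by conditioning on $\calF_k$ and applying Fact~\ref{fct:Madaule} to each subtree directly; the paper's route avoids the (mild) technicalities of arguing that the position of the minimal atom is a vaguely continuous functional. For the second limit, your sentence ``one deduces $\rho_\infty^{(k)}(f)\egaldistr\sigma^{(k_0)}(f)$'' overstates what the dichotomy gives: from $\P(\rho_{n,k_0}(f)\neq\rho_{n,n-k}(f))\le\epsilon$ you only get that the laws of $\rho_\infty^{(k)}(f)$ and $\sigma^{(k_0)}(f)$ are $\epsilon$-close (e.g.\ in Kolmogorov distance, using the monotonicity $\rho_{n,k_0}(f)\ge\rho_{n,n-k}(f)$). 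This is enough to conclude after letting $k_0\to\infty$, and the paper's write-up is equally terse at this step, but the claimed exact equality in law is not correct as stated.
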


Observe that by Corollary~\ref{cor:main}, the triangular array $(\rho_{n,k}, n \geq 1, k \leq n)$ is tight. Indeed, for any continuous positive function $f$, we have
\[
  \rho_{n,k}(f) \leq \rho_{n,0}(f) = \sum_{|u|=n} f(V(u)-M_n).
\]
A straightforward consequence of Theorem~\ref{thm:abbs} is the following, more intuitive convergence.
\begin{corollary}
\label{cor:abbs}
Let $(k_n)$ be such that $\lim_{n \to \infty} k_n = \lim_{n \to \infty} n - k_n = \infty$. Under the assumptions of Theorem~\ref{thm:main},
\begin{equation}
  \lim_{n \to \infty} \rho_{n,k_n} = D_1 \quad \text{ in law}.
\end{equation}
\end{corollary}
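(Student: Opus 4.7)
The plan is to deduce Corollary \ref{cor:abbs} from Theorem \ref{thm:abbs} via a monotonicity sandwich. The key observation is that for any nonnegative $f \in C_c(\R)$,
\[\rho_{n,k}(f) = \sum_{|u|=n} \ind{|u \wedge \hat{u}_n| \geq k} f(V(u) - M_n)\]
is nonincreasing in $k$, since raising $k$ only tightens the genealogical constraint. Hence on $S$, for every fixed $k \in \N$ and every $n$ large enough that $k \leq k_n \leq n - k$ (which eventually holds since $k_n \to \infty$ and $n - k_n \to \infty$), we have the pathwise sandwich
\[\rho_{n,n-k}(f) \leq \rho_{n,k_n}(f) \leq \rho_{n,k}(f).\]

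Next I pass to Laplace functionals. For $\lambda \geq 0$, $x \mapsto e^{-\lambda x}$ is decreasing, so for $n$ large
\[\E\bigl[e^{-\lambda \rho_{n,k}(f)}\indset{S}\bigr] \leq \E\bigl[e^{-\lambda \rho_{n,k_n}(f)}\indset{S}\bigr] \leq \E\bigl[e^{-\lambda \rho_{n,n-k}(f)}\indset{S}\bigr].\]
By Theorem \ref{thm:abbs}, for each fixed $k$ the left-hand side tends as $n \to \infty$ to $\E[e^{-\lambda \rho_{\infty,k}(f)}\indset{S}]$, where $\rho_{\infty,k}$ denotes the genuine limit in law of $\rho_{n,k}$. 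For the right-hand side, I select a subsequence $n_j$ along which the $\limsup$ in $n$ is attained; using the tightness of $(\rho_{n,n-k})_n$ pointed out just before Corollary \ref{cor:abbs}, a further extraction converges in law to some accumulation point $\tilde{\rho}_{\infty,k}$, and bounded convergence yields
\[\limsup_{n \to \infty}\E\bigl[e^{-\lambda \rho_{n,n-k}(f)}\indset{S}\bigr] = \E\bigl[e^{-\lambda \tilde{\rho}_{\infty,k}(f)}\indset{S}\bigr].\]

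Finally I let $k \to \infty$. Theorem \ref{thm:abbs} asserts both that $\rho_{\infty,k} \to D_1$ in law on $S$ and that \emph{every} accumulation point $\tilde{\rho}_{\infty,k}$ of $(\rho_{n,n-k})_n$ also tends to $D_1$ in law on $S$ as $k \to \infty$. Bounded convergence thus sends both outer bounds to $\E[e^{-\lambda D_1(f)}\indset{S}]$, and the squeeze gives
\[\lim_{n \to \infty}\E\bigl[e^{-\lambda \rho_{n,k_n}(f)}\indset{S}\bigr] = \E\bigl[e^{-\lambda D_1(f)}\indset{S}\bigr]\]
for every $\lambda \geq 0$ and every nonnegative $f \in C_c(\R)$. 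Since Laplace functionals over nonnegative $f \in C_c(\R)$ characterise the distribution of a random point measure in the vague topology (see e.g.\ \cite{Kal02}), we conclude that $\lim_{n \to \infty} \rho_{n,k_n} = D_1$ in law on $S$, as announced. The only genuine input is the monotonicity of $\rho_{n,k}(f)$ in $k$; once this is isolated, the argument is a soft two-sided squeeze requiring no new technical estimate, so I do not anticipate any real obstacle beyond organising the double limit cleanly.
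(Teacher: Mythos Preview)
Your proof is correct and follows essentially the same approach as the paper: both arguments rest on the monotonicity of $k \mapsto \rho_{n,k}(f)$ for nonnegative $f\in C_c(\R)$, yielding the sandwich $\rho_{n,n-k}(f)\le \rho_{n,k_n}(f)\le \rho_{n,k}(f)$ for large $n$, and then invoke Theorem~\ref{thm:abbs} on both ends. Your version is somewhat more explicit---working through Laplace functionals and carefully handling the accumulation points of $(\rho_{n,n-k})_n$ via subsequence extraction---whereas the paper compresses the squeeze into a single line; but the substance is the same.
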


\begin{proof}
We observe that for any $i \leq j \leq k$, and any continuous positive function $f$, we have $\rho_{n,i}(f) \geq \rho_{n,j}(f) \geq \rho_{n,k}(f)$. Consequently, for any $k \in \N$, for all $n \geq 1$ large enough, we have $\rho_{n,k}(f) \geq \rho_{n,k_n}(f) \geq \rho_{n,n-k}(f)$. Applying Theorem~\ref{thm:abbs}, we have
\[
  \lim_{k \to \infty} \limsup_{n \to \infty} \P(\rho_{n,k_n}(f)-\rho_{n,k}(f) > \epsilon) = 0 \quad \text{ for any } \epsilon > 0,
\]
which concludes the proof.
\end{proof}

The first limit in distribution for Theorem~\ref{thm:abbs} is a straightforward consequence of Fact~\ref{fct:Madaule}.
\begin{lemma}
\label{lem:oneSideabbs}
We have 
\begin{equation}
  \displaystyle \lim_{k \to \infty} \lim_{n \to \infty} \rho_{n,k} = D \quad \text{ in law.}
\end{equation}
\end{lemma}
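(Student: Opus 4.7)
The plan is to deduce the lemma from Theorem~\ref{thm:main} in two steps: first identify $\lim_{n\to\infty}\rho_{n,k}$ as a distributional limit $\rho_{\infty,k}$ built from $\mu_\infty$, then let $k\to\infty$ and extract the decoration $D_{n^*}$ using the non-atomicity of $\nu$.

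For the inner limit I would apply Theorem~\ref{thm:main} together with Skorokhod's representation to assume that $(\mu_n,Z_n)\to(\mu_\infty,Z_\infty)$ almost surely on $S$. The atoms of $\mu_\infty$ are the points $(u^{(m)},\xi_m-\log Z_\infty+d)$ for $d\in\mathcal{P}(D_m)$; since $\min D_m=0$ a.s.\ and $(\xi_m)$ is a Poisson point process on $\R$ with intensity $c_*e^x\,dx$, the minimum of $\mu_\infty$ in its second coordinate is almost surely attained at the unique atom $(u^{(n^*)},\xi_{n^*}-\log Z_\infty)$, where $n^*:=\arg\min_m\xi_m$. Because $\mu_\infty$ has only finitely many atoms in $\bar{\calU}\times[\min\mu_\infty-a,\min\mu_\infty+a]$ for any $a>0$, vague convergence on such a compact rectangle yields $M_n\to\min\mu_\infty$ and, for $n$ large, the equality $(\hat u_n)_k=(u^{(n^*)})_k$. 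Tested against a continuous function $f$ with support in $[-a,a]$, this gives
\[
 \rho_{n,k}(f)\longrightarrow \rho_{\infty,k}(f):=\sum_{m\,:\,u^{(m)}\in B((u^{(n^*)})_k)}\;\sum_{d\in\mathcal{P}(D_m)}f(\xi_m-\xi_{n^*}+d).
\]

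The outer limit $k\to\infty$ is then handled as follows. For $f$ supported in $[-a,a]$, only indices $m$ with $\xi_m\leq\xi_{n^*}+a$ can contribute (because every $d\geq 0$), and by the Poisson structure of $(\xi_m)$ there are a.s.\ only finitely many such $m$. For each $m\neq n^*$, the i.i.d.\ samples $u^{(m)},u^{(n^*)}\in\bar{\calU}$ drawn under $\bar{\nu}$ are distinct almost surely, because $\nu$ is non-atomic by Lemma~\ref{lem:nonAtomic}. Hence $K_m:=|u^{(m)}\wedge u^{(n^*)}|$ is finite and, for $k$ exceeding $\max\{K_m:m\neq n^*,\;\xi_m\leq\xi_{n^*}+a\}$, the only surviving index in the sum is $m=n^*$. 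This gives $\rho_{\infty,k}(f)\to D_{n^*}(f)$ almost surely; since $(D_m)$ are i.i.d.\ of law $\calD$ and independent of the Poisson process $(\xi_m)$ that determines $n^*$, the limit $D_{n^*}$ itself has law $\calD$, which is the required conclusion.

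The main obstacle is the continuity argument used in the inner limit: one must verify that $M_n$ and $(\hat u_n)_k$ truly converge to $\min\mu_\infty$ and $(u^{(n^*)})_k$, rather than merely being tight. This requires almost sure uniqueness of the minimum of $\mu_\infty$ with a positive gap to the next atom — a consequence of the absolute continuity of the intensity $c_*e^x\,dx$ and of $\min D_m$ being attained at a single decoration point — combined with vague convergence on the compact rectangle $\bar{\calU}\times[\min\mu_\infty-a,\min\mu_\infty+a]$. Because the ancestor-at-depth-$k$ map is locally constant on $\bar{\calU}$, convergence of $\hat u_n$ in $\bar{\calU}$ automatically upgrades to the equality $(\hat u_n)_k=(u^{(n^*)})_k$ eventually, which closes the argument.
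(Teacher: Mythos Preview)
Your proof is correct and rests on the same two ingredients as the paper's: convergence of the (genealogically enriched) extremal process for the inner limit, and non-atomicity of $\nu$ (Lemma~\ref{lem:nonAtomic}) for the outer one.

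The execution differs in a way worth noting. The paper never invokes Skorokhod nor isolates $\hat u_n$ through an argmin--continuity argument. Instead it applies Fact~\ref{fct:Madaule} jointly to the subtrees rooted at level $k$ (conditioning on $\calF_k$), so that the inner limit is written directly as $\sum_{|u|=k}\ind{u^{(1)}>u}\,\mu^u_\infty$, with the indicator $\ind{u^{(1)}>u}$ doing the selection of the leader's ancestor rather than any explicit tracking of $\hat u_n$. The $k\to\infty$ step is then a first-moment computation: the contribution of indices $n\geq 2$ is bounded, conditionally on $\calF_k$, by $\bigl(\sum_{|u|=k}\bar\nu(B(u))^2\bigr)$ times a finite quantity, and Lemma~\ref{lem:nonAtomic} makes this vanish. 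This route sidesteps precisely the obstacle you flag as the main one---continuity of the argmin under vague convergence---at the price of a slightly less explicit description of $\lim_n\rho_{n,k}$. Your route is equally valid: since each $B(w)$ is clopen in $\bar\calU$ and the minimum of $\mu_\infty$ is a.s.\ attained only at first coordinate $u^{(n^*)}$, the stabilisation $(\hat u_n)_k=(u^{(n^*)})_k$ for large $n$ does follow from vague convergence as you outline.
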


\begin{proof}
Using Fact~\ref{fct:Madaule}, we observe that for any $k \in \N$, conditionally on $\calF_k$,
\[
  \lim_{n \to \infty} \left( \sum_{|v|=n,v > u} \delta_{V(v)-m_n}, Z_n^u, u \in \T(k)\right) = \left( \mu^u_\infty, Z^u_\infty, u \in \T(k) \right) \text{ in law on $S$.}
\]
Therefore, setting $M_n^u = \min_{|v|=n, v > u} V(v)$, we have in particular
\[
  \lim_{n \to \infty} \sum_{|u|=k} \ind{M_n^u = M_n} \sum_{|v|=n,v>u} \delta_{V(v)-m_n} = \sum_{|u|=k} \ind{u^{(1)}>u} \mu^u_\infty \quad \text{ in law on $S$.}
\]
Observe that $\sum_{|u|=k} \ind{u^{(1)}>u} \mu^u_\infty = \sum_{n=1}^{\infty}\ind{u^{(n)}_k = u^{(1)}_k} \theta_{\xi_n -\log Z_\infty} D_n$.

Let $f$ be a continuous positive function with compact support, we prove that
\begin{equation}
  \label{eqn:convergenceSupplementaire}
  \lim_{k \to \infty} \sum_{n=2}^{\infty} \ind{u^{(n)}_k = u^{(1)}_k} \sum_{d \in \mathcal{P}(D_n)}  f(\xi_n + d-\log Z_\infty) = 0 \quad \text{ in probability}.
\end{equation}
In effect, for any $k \in \N$, we have
\begin{multline*}
  \E\left( \sum_{n=2}^{\infty} \sum_{d \in \mathcal{P}(D_n)} f(\xi_n+d-\log Z_\infty) \ind{u^{(n)}_k = u^{(1)}_k} \middle| \calF_k \right)\\
   = \sum_{|u|=k} \bar{\nu}(B(u)) \sum_{n=2}^{\infty} \E\left(\ind{u^{(n)}_k = u} g(\xi_n - \log Z_\infty)\middle|\calF_k\right),
\end{multline*}
where $g : x \mapsto \E\left( \sum_{d \in \mathcal{P}(D)} f(x + d) \right)$. Therefore,
\begin{multline*}
  \E\left( \sum_{n=2}^{\infty} \sum_{d \in \mathcal{P}(D_n)} f(\xi_n+d-\log Z_\infty) \ind{u^{(n)}_k = u^{(1)}_k} \middle| \calF_k \right)\\ = \left( \sum_{|u|=k} \bar{\nu}(B(u))^2 \right) \sum_{n=2}^{\infty} \E(g(\xi_n-\log Z_\infty)|\calF_k).
\end{multline*}
As $\lim_{k \to \infty} \max_{|u|=k} \bar{\nu}(B(u)) = 0$ a.s. (see Lemma~\ref{lem:nonAtomic}), we conclude that \eqref{eqn:convergenceSupplementaire} holds. This result yields that $\lim_{k \to \infty} \sum_{|u|=k} \ind{u^{(1)}>u} \mu^u_\infty(f) = \theta_{\xi_1-\log Z_\infty} D(f)$ in law. We conclude the proof observing that we chose the law of the decoration such that $\min D = 0$ a.s.
\end{proof}

To complete the proof of Theorem~\ref{thm:abbs}, we first observe that the genealogy of particles close to the minimal displacement at time $n$ in the branching random walk are either close relatives, or their most recent common ancestor is a close relative to the root. This well-known estimate can be found for example in \cite[Theorem 4.5]{Mal16+}. For any $z \geq 1$, we have
\begin{equation}
  \label{eqn:genealogy}
    \lim_{k \to \infty} \limsup_{n \to \infty} \P\left( \exists u,v \in \T : \begin{array}{l}|u|=|v|=n, V(u),V(v) \leq m_n + z,\\ |u\wedge v| \in [k,n-k]\end{array} \right) = 0.
\end{equation}

\begin{lemma}
\label{lem:otherSideabbs}
For any $k \in \N$, we set $(n^k_p, p \geq 1)$ an increasing sequence such that $(\rho_{n^k_p,n^k_p-k})$ converges. We have $\lim_{k \to \infty}  \lim_{p \to \infty} \rho_{n^k_p,n^k_p-k} = D$ in law.
\end{lemma}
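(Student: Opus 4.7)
The plan is to combine Lemma~\ref{lem:oneSideabbs} with the genealogical dichotomy \eqref{eqn:genealogy} via a Laplace-functional argument. Let $f$ be a continuous nonnegative function supported in $[-A,A]$. For $k \leq n-k$, the event $\{|u \wedge \hat u_n| \geq n-k\}$ implies $\{|u \wedge \hat u_n| \geq k\}$, so $\rho_{n,k}(f) \geq \rho_{n,n-k}(f)$ and hence
\[
0 \leq e^{-\rho_{n,n-k}(f)} - e^{-\rho_{n,k}(f)} \leq \indset{\{\rho_{n,k}(f) > \rho_{n,n-k}(f)\}}.
\]
The whole argument therefore reduces to proving
\[
\lim_{k \to \infty} \limsup_{n \to \infty} \P\left( \rho_{n,k}(f) > \rho_{n,n-k}(f) \right) = 0.
\]

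For this key bound I would use the tightness of $M_n - m_n$ on $S$ (Aïdékon's convergence theorem): given $\epsilon > 0$, there is $z > 0$ with $\limsup_n \P(M_n > m_n + z, S) < \epsilon$. On the complementary event, every atom $u$ contributing to the strict inequality satisfies $V(u) \leq M_n + A \leq m_n + (z+A)$ and $|u \wedge \hat u_n| \in [k, n-k)$, while $\hat u_n$ itself has $V(\hat u_n) \leq m_n + (z+A)$; the pair $(u, \hat u_n)$ then witnesses the bad event of \eqref{eqn:genealogy} at height $z' = z+A$. Hence \eqref{eqn:genealogy} yields $\lim_k \limsup_n \P(\rho_{n,k}(f) > \rho_{n, n-k}(f), S) \leq \epsilon$, and since $\epsilon$ is arbitrary the claim follows.

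Passing to the chosen subsequences, the previous bound implies
\[
\lim_{k \to \infty} \limsup_{p \to \infty} \left| \E(e^{-\rho_{n^k_p, n^k_p - k}(f)}) - \E(e^{-\rho_{n^k_p, k}(f)}) \right| = 0.
\]
For each fixed $k$, Lemma~\ref{lem:oneSideabbs} produces a random measure $\rho_{\infty,k}$ with $\lim_n \rho_{n,k} = \rho_{\infty,k}$ in law and $\lim_k \rho_{\infty,k} = D$ in law, while the defining property of the subsequence gives $\lim_p \rho_{n^k_p, n^k_p - k} = \tilde\rho_k$ in law for some weak limit $\tilde\rho_k$. Combining these three facts yields $\lim_k \E(e^{-\tilde\rho_k(f)}) = \E(e^{-D(f)})$, and since $f$ was an arbitrary continuous nonnegative compactly supported function, the Laplace-functional characterization of vague convergence of random point measures concludes the proof. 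The main difficulty is the probability bound: the sleight of hand is to pair an arbitrary atom $u$ of the difference with $\hat u_n$ itself in order to feed \eqref{eqn:genealogy} a configuration of two low particles with a forbidden common-ancestor generation; the remainder is standard functional-analytic plumbing.
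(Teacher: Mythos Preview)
Your proof is correct and follows essentially the same route as the paper: both arguments reduce to showing $\lim_{k}\limsup_{n}\P(\rho_{n,k}(f)\neq\rho_{n,n-k}(f))=0$ by pairing a contributing particle $u$ with $\hat u_n$ and feeding the resulting pair into the genealogy estimate \eqref{eqn:genealogy}, after controlling $M_n-m_n$ via its tightness. The only cosmetic difference is that you package the conclusion through Laplace functionals, whereas the paper invokes Lemma~\ref{lem:oneSideabbs} directly once the difference vanishes in probability.
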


\begin{proof}
For any positive continuous function $f$ with compact support and $k \in \N$, we have
\[
  \rho_{n,k}(f) - \rho_{n,n-k}(f) = \sum_{|u|=n} \ind{|u\wedge \hat{u}_n| \in [k,n-k]} f(V(u)-M_n).
\]
We write $z = \sup\{x \geq 0 : f(x) > 0 \}$, for any $y \geq 0$, we have
\begin{multline*}
  \P\left( \rho_{n,k}(f) - \rho_{n,n-k}(f)>0 \right)
  \leq \P\left( \exists u \in \T(n) : \begin{array}{l}|u\wedge \hat{u}_n| \in [k,n-k],\\ V(u)-M_n \leq z\end{array} \right)\\
  \leq \P(M_n - m_n \geq y) + \P\left( \exists u,v \in \T(n) : \begin{array}{l}|u\wedge v| \in [k,n-k],\\ V(u),V(v) \leq m_n + y + z \end{array} \right).
\end{multline*}
Letting $n$ then $k \to \infty$, we have by \eqref{eqn:genealogy},
\begin{equation*}
  \limsup_{k \to \infty} \limsup_{n \to \infty} \P\left( \rho_{n,k}(f) - \rho_{n,n-k}(f)>0 \right)\leq \sup_{n \in \N} \P(M_n \geq m_n + y).
\end{equation*}
Moreover, $(M_n-m_n)$ is tight, by \cite{Aid13}, thus letting $y \to \infty$, we conclude that
\[
  \lim_{k \to \infty} \limsup_{n \to \infty} \P\left( \rho_{n,k}(f) - \rho_{n,n-k}(f)>0 \right) = 0.
\]
Using Lemma~\ref{lem:oneSideabbs}, we conclude the proof.
\end{proof}

We were not able to study the limiting distribution of $\rho_{n,n-k}$, but this law can probably be constructed, similarly to the point process $\lim_{t \to \infty} \mathscr{Q}(t,\zeta)$ defined in \cite{ABBS13} for the branching Brownian motion.
\begin{conjecture}
For any $k \in \N$, there exists a point measure $\rho_k$ such that $\lim_{n \to \infty} \rho_{n,n-k} = \rho_k$.
\end{conjecture}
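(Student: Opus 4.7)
The plan is to reduce the conjecture to a statement about the joint convergence of a finite-depth marked subtree surrounding the lineage to the minimizer. Writing $w_n = (\hat u_n)_{n-k}$ and letting $\tau_n^{(k)}$ denote the marked subtree of depth $k$ rooted at $w_n$ (i.e.\ the vertices $u\in\T$ with $w_n\leq u$ and $|u|-|w_n|\leq k$, labelled by their relative displacements $V(u)-V(w_n)$), the measure $\rho_{n,n-k}$ is a deterministic function of $\tau_n^{(k)}$: it is the point measure of leaf positions of $\tau_n^{(k)}$ shifted so that its minimal leaf sits at $0$. Hence it suffices to prove convergence in law of $\tau_n^{(k)}$ as $n \to \infty$.

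The natural tool is the spine change of measure associated with the derivative martingale, as used by Lyons, A\"id\'ekon and Shi. Under the size-biased law, one distinguishes a ray whose increments form a centred random walk and along which the grafted subtrees are conditionally independent copies of the original branching random walk; conditioning on this ray realizing the minimum at time $n$, the last $k$ steps together with the subtrees of depth $\leq k$ grafted off these steps reproduce $\tau_n^{(k)}$ in distribution. The first concrete step of the proof would be to show that the joint law of the last $k$ spine increments (viewed backward from the tip) together with the $k$ corresponding grafted subtrees of depth $\leq k$ converges as $n\to\infty$, and to take this limit as the candidate for $\rho_k$. Tightness is essentially free from Corollary~\ref{cor:main}; the content is in the identification of the limit and in showing that the backward increments have no ``escape to infinity''.

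The main obstacle, and the probable reason the statement is left as a conjecture, is that the event ``this specific ray realizes the global minimum at time $n$'' is non-local and its conditional density depends on the whole history. Two subtleties must be handled: first, one must prove that the reversed sequence of spine increments in the last $k$ generations converges to a stationary law, in the renewal-theoretic spirit of the reversal of a random walk conditioned to stay positive; second, one must show that the positions at which subtrees are grafted on the spine are asymptotically jointly distributed with those subtrees themselves (which, being of depth $\leq k$, depend on their starting position in a tame continuous way). Making both assertions quantitative and uniform in $n$ would require extending the fine ballot-type estimates of A\"id\'ekon-Shi, combined with Chen's tail bounds for $M_n-m_n$, to control the joint law of the spine and its close-to-tip grafted subtrees.

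If this programme succeeds, the limit $\rho_k$ would admit an explicit description as the minimum-centred leaf profile of a stationary depth-$k$ marked tree rooted at the terminal endpoint of a spine realizing the minimum. As a sanity check, one should then recover, by letting $k\to\infty$, that $\rho_k \Rightarrow D_1$ in law, which is consistent with Theorem~\ref{thm:abbs} and Lemma~\ref{lem:otherSideabbs}; this compatibility provides a useful consistency constraint that any candidate construction of $\rho_k$ must satisfy.
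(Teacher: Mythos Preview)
The statement you are attempting to prove is explicitly labelled a \emph{Conjecture} in the paper, immediately preceded by the sentence ``We were not able to study the limiting distribution of $\rho_{n,n-k}$, but this law probably exists.'' There is therefore no proof in the paper to compare your proposal against.

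Your text is not a proof either, and you appear to be aware of this: you outline a programme (reduce to convergence of the depth-$k$ marked subtree around the minimizer's lineage, analyse it via the spinal decomposition, establish a stationary limit for the reversed spine increments together with the grafted subtrees), and you identify the main obstacle yourself --- the conditioning on the spine realizing the global minimum is non-local and its density depends on the full history. None of the steps you list as ``the first concrete step'' or ``two subtleties'' is actually carried out; tightness you correctly note is essentially free from Corollary~\ref{cor:main}, but identification of the limit is precisely what is missing, and nothing in the paper (nor in the references you invoke) gives the required renewal-type control on the joint law of the last $k$ spine steps and their grafted subtrees under this conditioning. Your sketch is a reasonable roadmap and is consistent with the heuristics behind the ABBS description of the decoration, but as written it neither proves the conjecture nor claims to.
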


\appendix

\section{Some technical results}

In this section, we provide some technical estimates on the branching random walks. We first prove that \eqref{eqn:integrability} is equivalent to the usual integrability conditions for the branching random walk.
\begin{lemma}
\label{lem:equivalent}
Under assumptions \eqref{eqn:supercritical}, \eqref{eqn:boundary} and \eqref{eqn:variance}, the condition \eqref{eqn:integrability} is equivalent to
\begin{align*}
  &\E\left( \sum_{|u|=n} e^{-V(u)} \log_+ \left(\sum_{|u|=n} e^{-V(u)} \right)^2 \right)  < \infty\\
    &\E\left( \sum_{|u|=n} V(u)_+ e^{-V(u)} \log_+ \left(\sum_{|u|=n} V(u)_+ e^{-V(u)} \right) \right) < \infty
\end{align*}
\end{lemma}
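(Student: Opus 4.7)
Write $A = \sum_{|u|=1} e^{-V(u)}$, $B = \sum_{|u|=1} V(u)_+ e^{-V(u)}$, and $C = \sum_{|u|=1} V(u)^2 e^{-V(u)}$, so that \eqref{eqn:integrability} reads $\E[A (\log_+(A+B))^2]<\infty$ and the two target conditions become $\E[A (\log_+ A)^2]<\infty$ and $\E[B \log_+ B]<\infty$. Under \eqref{eqn:boundary} and \eqref{eqn:variance} we have $\E[A]=1$ and $\E[C]=\sigma^2<\infty$. The backbone of the argument will be the pointwise Cauchy--Schwarz estimate
\[
  B = \sum V(u)_+ e^{-V(u)/2}\cdot e^{-V(u)/2} \leq \sqrt{A \cdot \textstyle\sum V(u)_+^2 e^{-V(u)}} \leq \sqrt{AC},
\]
which is the only place where \eqref{eqn:variance} enters.

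For the direction \eqref{eqn:integrability} $\Rightarrow$ both conditions: the monotonicity $\log_+ A \leq \log_+(A+B)$ immediately gives the first. For the second, I would apply $2xy\leq x^2+y^2$ with $x=\sqrt A \log_+ B$ and $y=\sqrt C$, combined with $B\leq\sqrt{AC}$, to obtain the pointwise bound
\[
  2 B \log_+ B \leq 2\sqrt{AC}\log_+ B \leq A(\log_+ B)^2 + C.
\]
Taking expectations and using $\log_+ B \leq \log_+(A+B)$ together with $\E[C]<\infty$ then concludes.

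For the converse, I would split the expectation according to whether $A\geq 1$ or $A<1$. On $\{A\geq 1\}$, the refined logarithm estimate $\log(A+B)=\log A + \log(1+B/A) \leq \log_+ A + B/A$ together with $(u+v)^2\leq 2u^2+2v^2$ yields
\[
 A(\log_+(A+B))^2 \leq 2 A(\log_+ A)^2 + 2 B^2/A \leq 2 A(\log_+ A)^2 + 2 C,
\]
the last step again by Cauchy--Schwarz. On $\{A<1\}$, the cruder estimate $\log_+(A+B) \leq \log_+(1+B) \leq \log 2 + \log_+ B$, combined with $A\leq 1$, reduces the task to bounding $\E[(\log_+ B)^2]$; but $(\log_+ y)^2 \leq y\log_+ y$ for every $y\geq 0$ (since $\log_+ y \leq y$), so this is controlled by $\E[B \log_+ B]$.

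The main non-routine step is spotting that the correct way to use \eqref{eqn:variance} is through the Cauchy--Schwarz bound $B^2\leq AC$; once it is in place, both directions reduce to elementary logarithm manipulations. The one structural choice is the splitting at $A=1$ in the converse, needed to cope with the vanishing of $\log_+ A$ for small $A$ while still obtaining a bound compatible with the hypothesis $\E[B\log_+ B]<\infty$ rather than the stronger $\E[B(\log_+ B)^2]<\infty$.
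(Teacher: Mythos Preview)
Your argument is correct. It differs from the paper's in both directions. For the implication \eqref{eqn:integrability} $\Rightarrow$ $\E[B\log_+ B]<\infty$, the paper invokes Lyons' spinal decomposition: it rewrites $\E[B\log_+ B]=\hat{\E}[V(w_1)_+\log_+ B]$ under the size-biased law $\hat{\P}=W_1\cdot\P$ and then applies Cauchy--Schwarz under $\hat{\P}$, bounding $\hat{\E}[V(w_1)^2]$ by \eqref{eqn:variance} and $\hat{\E}[(\log_+ B)^2]=\E[A(\log_+ B)^2]$ by \eqref{eqn:integrability}. Your pointwise route $B\leq\sqrt{AC}$ followed by $2\sqrt{AC}\log_+ B\leq A(\log_+ B)^2+C$ reaches the same conclusion without the change-of-measure machinery; the two arguments use the same ingredients (Cauchy--Schwarz and $\E[C]<\infty$) but yours is more elementary. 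For the converse, the paper simply cites \cite[Lemma~B.1]{Aid13}, whereas you give a self-contained proof via the split $A\gtrless 1$ and the same inequality $B^2\leq AC$; this has the advantage of making the role of \eqref{eqn:variance} explicit in both directions.
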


\begin{proof}
The reciprocal part is a direct consequence of \cite[Lemma B.1]{Aid13}. To prove the direct part, we first observe that by \eqref{eqn:integrability},
\begin{multline*}
  \qquad \qquad \E\left( \sum_{|u|=n} e^{-V(u)} \log_+ \left(\sum_{|u|=n} e^{-V(u)} \right)^2 \right)\\
  \leq \E\left( \sum_{|u|=n} e^{-V(u)} \log_+ \left(\sum_{|u|=n} (1 + V(u)_+)e^{-V(u)} \right)^2 \right)< \infty.\qquad \qquad 
\end{multline*}

We now use the celebrated spinal decomposition of the branching random walk, introduced by Lyons \cite{Lyo97}. Loosely speaking, it is an alternative description of the law of the branching random walk biased by the martingale $(W_n)$, as the law of a branching random walk $(\T,V)$ with a distinguished spine $w \in \partial \T$ that makes more children than usual. For any $u \in \T$, we write $\xi(u) = \log_+ \left(\sum_{v \in \Omega(\pi u)} \sum_{|u|=1} V(u)_+ e^{-V(u)}\right)$. We denote by $\hat{\P}=W_n.\P$ the size-biased distribution, and refer to \cite{Lyo97} for more details on the spinal decomposition. We have
\begin{align*}
 &\E\left( \sum_{|u|=n} V(u)_+ e^{-V(u)} \log_+ \left(\sum_{|u|=n} V(u)_+ e^{-V(u)} \right) \right)\\
 = &\hat{\E}\left( \xi(w_1) V(w_1)_+ \right)\\
 \leq &\hat{\E}\left( V(w_1)^2 \right)^{1/2} \hat{\E}\left( \xi(w_1)^2 \right)^{1/2} < \infty,
\end{align*}
by Cauchy-Schwarz inequality, using \eqref{eqn:variance} and \eqref{eqn:integrability} to conclude.
\end{proof}

We now prove that \eqref{eqn:epsilonto0} holds.
\begin{lemma}
\label{lem:entangled}
For any $\beta > 1$ and $k \leq n$, we set
\[
  R_{n,k}^\beta = \sum_{|u|=|v|=n} \ind{|u \wedge v| \in [k,n-k]} e^{\beta(m_n-V(u)) + \beta(m_n-V(v))}.
\]
For any $\epsilon>0$, we have $\displaystyle \lim_{k \to \infty} \limsup_{n \to \infty} \P\left(  R_{n,k}^\beta \geq \epsilon \right) = 0$.
\end{lemma}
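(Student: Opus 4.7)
The approach is to use the telescoping identity $\ind{k \leq |u \wedge v| \leq n-k} = \ind{|u \wedge v| \geq k} - \ind{|u \wedge v| \geq n-k+1}$ and to group pairs by their common ancestor at generation $k$, respectively $n-k+1$: this yields $R_{n,k}^\beta = S_{n,k} - T_{n,k}$ with
\[
S_{n,k} = \sum_{|w|=k}\nu_{\beta,n}(B(w))^2,\qquad T_{n,k} = \sum_{|w|=n-k+1}\nu_{\beta,n}(B(w))^2,
\]
both non-negative and $R_{n,k}^\beta \geq 0$. The strategy is to show that, in the iterated limit $n \to \infty$ then $k \to \infty$, both $S_{n,k}$ and $T_{n,k}$ converge to the same random quantity $Z_\infty^{2\beta}\sum_{j \geq 1}e^{-2\beta\xi_j^\beta} = \pi_\beta W_{\infty,\beta}^2$, so that their difference vanishes.

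The convergence of $S_{n,k}$ is essentially in the paper already: Theorem \ref{thm:cvSupercritical} gives the finite-dimensional convergence of $(\nu_{\beta,n}(B(w)))_{|w|=k}$, so $S_{n,k} \to S_{\infty,k} := \sum_{|w|=k}\nu_{\beta,\infty}(B(w))^2$ in law as $n \to \infty$, and the Poisson--Dirichlet computation in the proof of Theorem \ref{thm:DeS88} (using Lemma \ref{lem:nonAtomic} together with \cite[Proposition 10]{PiY97}) gives $\lim_{k \to \infty}S_{\infty,k} = \pi_\beta W_{\infty,\beta}^2$ almost surely. The main step is the matching asymptotic lower bound $\liminf_{k \to \infty}\liminf_{n \to \infty}T_{n,k} \geq \pi_\beta W_{\infty,\beta}^2$ in distribution. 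For this one invokes Theorem \ref{thm:main} and its limit $\mu_\infty = \sum_j\sum_{d \in \mathcal{P}(D_j)}\delta_{u^{(j)},\xi_j+d-\log Z_\infty}$. Given $\eta > 0$, restrict to a finite set $J$ of families carrying all but $\eta$ of $W_{\infty,\beta}$; by Lemma \ref{lem:nonAtomic} the pairwise prefixes $|u^{(j)} \wedge u^{(j')}|$ for distinct $j,j' \in J$ are almost surely finite, and, combined with the finite genealogical spread of Madaule's decorations (a feature underlying \eqref{eqn:genealogy} and the proof of Theorem \ref{thm:abbs}), this ensures that for $k$ larger than these a.s.\ finite random quantities and $n$ subsequently large, the ancestors $(u^{(j)}_{n-k+1})_{j \in J}$ are distinct elements of $\T(n-k+1)$, each collecting among its descendants the whole prelimit counterpart of family $j$. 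Consequently $T_{n,k} \geq \sum_{j \in J}\nu_{\beta,n}(B(u^{(j)}_{n-k+1}))^2$, and passing to the limit $n \to \infty$ via Theorem \ref{thm:main} yields $\sum_{j \in J}(Z_\infty^\beta e^{-\beta\xi_j^\beta})^2$, which exceeds $\pi_\beta W_{\infty,\beta}^2 - \eta$ by the choice of $J$.

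Combining the two steps and using $R_{n,k}^\beta = S_{n,k} - T_{n,k} \geq 0$,
\[
\limsup_{k \to \infty}\limsup_{n \to \infty}R_{n,k}^\beta \;\leq\; \lim_{k \to \infty}S_{\infty,k} - \liminf_{k \to \infty}\liminf_{n \to \infty}T_{n,k} \;\leq\; 0
\]
in distribution, whence $\lim_{k \to \infty}\limsup_{n \to \infty}\P(R_{n,k}^\beta \geq \epsilon) = 0$ for every $\epsilon > 0$. The principal obstacle is the lower bound on $T_{n,k}$: one must identify, in the prelimit, the balls $B(w)$ with $|w| = n-k+1$ that correspond to Madaule's families of $\mu_\infty$. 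This identification relies on the joint convergence with genealogy in Theorem \ref{thm:main}, on the non-atomicity of $\nu$ (so that distinct families of $\mu_\infty$ yield distinct generation-$(n-k+1)$ ancestors for $k$ large), and on the bounded genealogical spread of the decoration (so that each family is entirely contained in a single ball).
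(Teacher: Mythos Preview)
Your decomposition $R_{n,k}^\beta = S_{n,k} - T_{n,k}$ with
\[
  S_{n,k}=\sum_{|w|=k}\nu_{\beta,n}(B(w))^2,\qquad
  T_{n,k}=\sum_{|w|=n-k+1}\nu_{\beta,n}(B(w))^2,
\]
is correct and is a genuinely different starting point from the paper, which never introduces these quantities and instead bounds $R_{n,k}^\beta$ directly via the barrier estimates \eqref{eqn:listestimates} from \cite{Mal16+}. However, the execution of the lower bound on $T_{n,k}$ has a real gap, and the final ``limsup minus liminf'' step is not valid as written.

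\medskip

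\textbf{The gap in the $T_{n,k}$ bound.} Theorem~\ref{thm:main} gives convergence of $\mu_n$ to $\mu_\infty$ in law for the vague topology on $\bar{\calU}\times\R$. The topology on $\bar{\calU}$ only resolves ancestry at \emph{fixed} generations; it says nothing about the ancestor of a particle at generation $n-k+1$, which moves with $n$. Moreover, the families $(u^{(j)},\xi_j,D_j)$ are auxiliary randomness used to \emph{describe the law} of $\mu_\infty$; they are not coupled to the prelimit process beyond the fact that $\mu_n\Rightarrow\mu_\infty$. Consequently the expression $\nu_{\beta,n}(B(u^{(j)}_{n-k+1}))$ has no meaning that would allow you to pass to the limit via Theorem~\ref{thm:main}: there is no ``prelimit counterpart of family $j$'' available. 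The phrase ``bounded genealogical spread of the decoration'' points in the right direction, but that information is encoded in the quantitative estimate \eqref{eqn:genealogy}, not in the structure of $\mu_\infty$.

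\medskip

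\textbf{The arithmetic with limits in law.} Even if you had shown separately that $S_{n,k}\Rightarrow A$ and $T_{n,k}\Rightarrow A$ (same law), you could not conclude $S_{n,k}-T_{n,k}\to 0$; subtraction of limits in distribution requires joint control. Your displayed inequality mixes almost-sure and in-law limits and is not a valid deduction.

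\medskip

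\textbf{How the idea can be salvaged, and how it compares to the paper.} The decomposition does lead to a clean proof if you use \eqref{eqn:genealogy} \emph{pointwise} rather than through $\mu_\infty$. On the event
\[
  G_{n,k}(z)=\Big\{\nexists\, |u|=|v|=n:\ V(u),V(v)\le m_n+z,\ |u\wedge v|\in[k,n-k]\Big\},
\]
any two particles at level $\le m_n+z$ with the same generation-$k$ ancestor in fact share a generation-$(n-k+1)$ ancestor; hence $T_{n,k}\ge S_{n,k}^{\le z}:=\sum_{|w|=k}\big(\sum_{|u|=n,\,u>w,\,V(u)\le m_n+z}e^{\beta(m_n-V(u))}\big)^2$ on $G_{n,k}(z)$, and therefore
\[
  R_{n,k}^\beta \;\le\; S_{n,k}-S_{n,k}^{\le z}\;\le\; 2\,W_{n,\beta}\,W_{n,\beta}(z)
  \qquad\text{on }G_{n,k}(z),
\]
with $W_{n,\beta}(z)=\sum_{|u|=n}\ind{V(u)-m_n>z}e^{\beta(m_n-V(u))}$. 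Now \eqref{eqn:genealogy} controls $\P(G_{n,k}(z)^c)$ and tightness of $(W_{n,\beta})$ together with $W_{n,\beta}(z)\to 0$ (as $z\to\infty$, uniformly in $n$) finishes the job. This is essentially the paper's proof in disguise: its splitting $R_{n,k}^\beta\le \tilde R_{n,k}^\beta(h)+2W_{n,\beta}(h)W_{n,\beta}$ plays exactly the role of the pair $(G_{n,k}(z)^c,\ S_{n,k}-S_{n,k}^{\le z})$, and the paper then bounds $\tilde R_{n,k}^\beta(h)$ via the explicit barrier estimates \eqref{eqn:listestimates} rather than quoting \eqref{eqn:genealogy}. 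So your route, once repaired, is not more elementary; it trades the self-contained moment bounds of the paper for the black-box \eqref{eqn:genealogy} from \cite{Mal16+}.
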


\begin{proof}
To prove this result, we first introduce some notation. For any $u \in \T$, we set
\[
  \xi(u) = \log \sum_{|v|=|u|+1,v>u}  (1 + (V(v)-V(u))_+ ) e^{V(u)-V(v)}.
\]
For any $n \in \N$ and $k \leq n$, we write $f_n(k) = \frac{3}{2} \log \frac{n+1}{n-k+1}$ and, for $y,z,h \geq 0$,
\begin{align*}
  \mathcal{A}_n(y)& = \left\{ |u| \leq n : V(u_j) \geq f_n(j) - y, j \leq |u| \right\},\\
  \bar{\mathcal{A}}_n(y,h) &= \left\{ |u| = n : u \in \mathcal{A}_n(y), V(u) - f_n(n) + y \in [h-1,h] \right\}\\
  \mathcal{B}_n(y,z)& = \left\{ |u| \leq n : \xi(u_j) \leq z + (V(u_j) - f_n(j) + y)/2 , j \leq |u| \right\}.
\end{align*}
We introduce branching random walk estimates obtained in \cite{Mal16+}. There exist $C>0$ and a function $\chi$ such that $\lim_{z \to \infty} \chi(z)=0$ such that for any $k \leq n$ and $y,z,h \geq 1$ we have
\begin{align}
  &\P\left( \exists u, v \in \bar{\mathcal{A}}_n(y,h) \cap \mathcal{B}_n(y,z) : |u \wedge v| \in [k,n-k] \right) \leq C \frac{zyh^2e^{2h-y}}{k^{1/2}},\nonumber\\
  &\P\left( \bar{\mathcal{A}}_n(y,h) \cap \mathcal{B}^c_n(y,z) \neq \emptyset \right) \leq  \chi(z)yhe^{h-y},\nonumber \\
    &\P\left( \mathcal{A}_n(y) \neq \emptyset \right) \leq C y e^{-y} \quad \text{ and } \quad \E\left( \# \bar{\mathcal{A}}_n(y,h) \right) \leq C yhe^{h-y}. \label{eqn:listestimates}
\end{align}
In the rest of this proof, $C$ is a large positive constant, that depends only on the law of the branching random walk, and may change from line to line.

We decompose $R_{n,k}^\beta$ into three parts, that we bound separately. For any $h \geq 0$, we have
\[
  R_{n,k}^\beta \leq \tilde{R}^\beta_{n,k}(h) + 2 W_{n,\beta}(h) W_{n,\beta},
\]
where we write
\[\tilde{R}^\beta_{n,k}(y,h) = \sum_{|u|=|v|=n} \ind{|u \wedge v| \in [k,n-k]} \ind{V(u)-m_n \leq h} e^{\beta(m_n-V(u)) + \beta(m_n-V(v))},\]
and $W_{n,\beta}(h) = \sum_{|u|=n} \ind{V(u)-m_n \geq h} e^{\beta(m_n-V(u))}$.

By \eqref{eqn:listestimates}, for any $y,h \geq 0$, we have
\begin{align*}
  &\E\left( \sum_{|u|=n} \ind{u \in \mathcal{A}_n(y),V(u) \geq m_n + h} e^{\beta (m_n - V(u))} \right) \\
   = &\sum_{j=h+1}^{\infty} e^{-\beta(j-1)}\E\left( \# \bar{\mathcal{A}}_n(y,j+y) \right)\\
  \leq &C y e^{-y} \sum_{j=h+1}^{\infty} (j+y)e^{(1-\beta)j} \leq C y (h+y) e^{(1-\beta)h}.
\end{align*}
We have $E\left( \sum_{|u|=n} \ind{u \in \mathcal{A}_n(y)} e^{\beta (m_n - V(u))} \right) \leq C y e^{(\beta-1)y}$ by similar computations. Using the Markov inequality, there exists $C>0$ such that for any $\epsilon \geq 0$ and $y,h \geq 0$, we have
\begin{align*}
  \P\left( W_{n,\beta}(h) \geq \epsilon \right) &\leq \P\left( \mathcal{A}_n(y) \neq \emptyset \right) + \frac{C}{\epsilon} y (h+y) e^{(1-\beta)h}\\
  &\leq C y e^{-y}+ \frac{C}{\epsilon} y(h+y)e^{(1-\beta)h},
\end{align*}
and similarly for any $A>0$, $\P(W_{n,\beta} \geq A) \leq C ye^{-y} + C y^2 e^{(\beta-1)y}/A$. Thus, for any $\delta > 0$, we have
\begin{align*}
  \P\left( W_{n,\beta}(h) W_{n,\beta} \geq \delta \right)
  &\leq \P(W_{n,\beta}(h) \geq \delta\epsilon) + \P(W_{n,\beta} \geq 1/\epsilon)\\
  &\leq C y e^{-y} + C y (h+y) e^{(1-\beta)h}/(\delta \epsilon) + C \epsilon y e^{(\beta-1)y}.
\end{align*}
Choosing $y \geq 1$ large enough, then $\epsilon>0$ small enough and $h$ large enough, we obtain
\[
  \sup_{n \in \N} \P\left( W_{n,\beta}(h) W_{n,\beta} \geq 2\delta \right) \leq \delta.
\]

In a second time,we bound $\tilde{R}^\beta_{n,k}$, by observing that for any $y,z \geq 0$,
\begin{align*}
  \P(\tilde{R}^\beta_{n,k}(h) \neq 0) &\leq \P(\mathcal{A}_n(y) \neq \emptyset) + \sum_{j=0}^{h+y}\P\left( \bar{\mathcal{A}}_n(y,j) \cap \mathcal{B}^c_n(y,z) \neq \emptyset \right)\\
   &\qquad   + \sum_{j=0}^{h+y}\P\left( \exists u, v \in \bar{\mathcal{A}}_n(y,j) \cap \mathcal{B}_n(y,z) : |u \wedge v| \in [k,n-k] \right)\\
   \leq & C y e^{-y} + \chi(z) y (h+y)e^{h} + C \frac{zy(y+h)^2e^{2h+y}}{k^{1/2}},
\end{align*}
using again \eqref{eqn:listestimates}.

As a consequence, for any $\delta > 0$, we can choose $y \geq 1$, $\epsilon>0$, and $h \geq 0$ large enough such that for any $k,z \geq 0$ and $n \geq k$, we have
\[
  \P\left( R_{n,k}^\beta \geq \delta \right) \leq \delta + \chi(z)y (h+y)e^{h} + C \frac{zy(y+h)^2e^{2h+y}}{k^{1/2}}.
\]
Setting $z = k^{1/4}$, we obtain
$
  \limsup_{k \to \infty} \limsup_{n \to \infty} \P\left( R_{n,k}^\beta \geq \delta \right) \leq \delta,
$
which concludes the proof.
\end{proof}

\paragraph*{Acknowledgements.} I wish to thank Thomas Madaule and Julien Barral for many useful discussions, as well as pointing me references \cite{SuZ15} and \cite{BKNSW} respectively. 

\newcommand{\etalchar}[1]{$^{#1}$}

\end{document}